\numberwithin{equation}{section}
\newtheorem{theorem}{Theorem}
\newtheorem{definition}[theorem]{Definition}
\newtheorem{lemma}{Lemma}
\newtheorem{corollary}{Corollary}
\begin{document}
 \title{On automorphisms of blowups of $\mathbb{P}^3$}
 \author{Tuyen Trung Truong}
    \address{Indiana University, Bloomington IN 47405}
 \email{truongt@indiana.edu}
\thanks{}
    \date{\today}
    \keywords{Automorphisms, Blowups of $\mathbb{P}^3$, Dynamical degrees, Topological entropy}
    \subjclass[2010]{37F, 14D, 32U40, 32H50}
    \begin{abstract}Let $\pi :X\rightarrow \mathbb{P}^3$ be a finite composition of blowups along smooth centers. We show that for "almost all" of such $X$, if $f\in Aut(X)$ then its first and second dynamical degrees are the same. We also construct many examples of finite blowups $X\rightarrow \mathbb{P}^3$, whose automorphism group  $Aut(X)$ has only finitely many connected components.
    
We also present a heuristic argument showing that for a "generic" compact K\"ahler manifold $X$ of dimension $\geq 3$, the automorphism group $Aut(X)$ has only finitely many connected components. 
    
\end{abstract}
\maketitle
\section{Introduction}
While there are many examples of compact complex surfaces having automorphisms of positive entropies (works of Cantat \cite{cantat}, Bedford-Kim \cite{bedford-kim2}\cite{bedford-kim1}\cite{bedford-kim}, McMullen \cite{mcmullen3}\cite{mcmullen2}\cite{mcmullen1}\cite{mcmullen}, Oguiso \cite{oguiso3}\cite{oguiso2}, Cantat-Dolgachev \cite{cantat-dolgachev}, Zhang \cite{zhang}, Diller \cite{diller}, D\'eserti-Grivaux \cite{deserti-grivaux}, Reschke \cite{reschke},...), there are few interesting examples of manifolds of higher dimensions having automorphisms of positive entropies (Oguiso \cite{oguiso1}\cite{oguiso}, Oguiso-Perroni \cite{oguiso-perroni},...). Some restrictions on projective $3$-manifolds having automorphisms of positive entropies are known (Zhang \cite{zhang2}\cite{zhang1},...).  On blowups of $\mathbb{P}^3$ or of products of projective spaces $\mathbb{P}^k$, only pseudo-automorphims of positive entropies are constructed up to date (Bedford-Kim \cite{bedford-kim3}, Perroni-Zhang \cite{perroni-zhang}, Blanc \cite{blanc},...). 

This paper concerns the dynamical degrees and topological entropies of automorphisms of finite blowups $X\rightarrow \mathbb{P}^3$. (Definitions of dynamical degrees and topological entropies are given in the next section.)  

\begin{theorem}
Let $X=X_n\rightarrow X_{n-1}\rightarrow \ldots \rightarrow X_1\rightarrow X_0=\mathbb{P}^3$ be a finite composition of blowups along smooth centers. Assume that each $X_{j+1}\rightarrow X_j$ ($0\leq j\leq n-1$) is either

1) A blowup of $X_j$ at a point 

or

2) A blowup of $X_j$ along a smooth curve $C\subset X_j$, so that $c_1(X_j).C\not= 2(g-1)$, where $c_1(X_j)$ is the first Chern class of $X_j$ and $g$ is the genus of $C$.

Then, for every $f\in Aut(X)$, its dynamical degrees satisfy $\lambda _1(f)=\lambda _2(f)$.  

\label{TheoremDynamicalDegreesAutomorphismsBlowupP3}\end{theorem}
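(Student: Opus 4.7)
The plan is to use Perron--Frobenius theory on the nef cone, together with the invariance of the canonical class under any automorphism, to reduce the claim to a non-vanishing statement about a triple intersection, and then to establish that non-vanishing using the blowup tower and the numerical hypothesis.

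For any $f\in Aut(X)$, Poincar\'e duality identifies $f^{*}|_{H^{2,2}(X,\mathbb{R})}$ with the transpose of $(f^{*})^{-1}|_{H^{1,1}(X,\mathbb{R})}$ (via $f_{*}=(f^{*})^{-1}$ and the perfect cup pairing). Hence $\lambda_{2}(f)=\rho((f^{*})^{-1}|_{H^{1,1}})=\lambda_{1}(f^{-1})$, so the theorem reduces to showing $\lambda_{1}(f)=\lambda_{1}(f^{-1})$. Applying Perron--Frobenius to the actions of $f^{*}$ and $(f^{*})^{-1}$ on the salient $f^{*}$-invariant nef cone of $X$, there exist nonzero nef classes $\eta,\eta'\in H^{1,1}(X,\mathbb{R})$ with $f^{*}\eta=\lambda_{1}(f)\,\eta$ and $f^{*}\eta'=\eta'/\lambda_{1}(f^{-1})$. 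Since $f^{*}K_{X}=K_{X}$ and $f^{*}$ preserves top-degree integration,
\begin{equation*}
\int_{X}\eta\cdot\eta'\cdot K_{X}=\int_{X}f^{*}\eta\cdot f^{*}\eta'\cdot f^{*}K_{X}=\frac{\lambda_{1}(f)}{\lambda_{1}(f^{-1})}\int_{X}\eta\cdot\eta'\cdot K_{X}.
\end{equation*}
The desired equality $\lambda_{1}(f)=\lambda_{1}(f^{-1})$ therefore follows provided $\int_{X}\eta\cdot\eta'\cdot K_{X}\neq 0$.

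The heart of the proof is this non-vanishing. Writing $K_{X}=-4h+\sum_{j}a_{j}E_{j}$ with $h=\pi^{*}H$, $a_{j}=2$ for point blowups, and $a_{j}=1$ for curve blowups, expand the triple intersection as $-4\,\eta\cdot\eta'\cdot h+\sum_{j}a_{j}\,\eta\cdot\eta'\cdot E_{j}$. The first term is non-negative since $\eta,\eta'$, and $h$ are all nef. For point blowups, the relations $h\cdot E_{j}^{\,2}=0$ and $E_{j}^{\,3}=1$ give definite sign control on $\eta\cdot\eta'\cdot E_{j}$. For a blowup along a smooth curve $C_{j}\subset X_{j}$ of genus $g_{j}$, one would use $E_{j}^{\,3}=\pm\deg N_{C_{j}/X_{j}}=\pm\bigl(c_{1}(X_{j})\cdot C_{j}+2(g_{j}-1)\bigr)$ together with adjunction to rewrite the intersection $\eta\cdot\eta'\cdot E_{j}$ (after expanding $\eta,\eta'$ in the natural basis arising from the blowup tower) as a multiple of $c_{1}(X_{j})\cdot C_{j}-2(g_{j}-1)$. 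The hypothesis of case (2) is exactly what makes this multiplier nonzero, thereby ruling out the degenerate cancellations in which the exceptional terms would balance $-4\,\eta\cdot\eta'\cdot h$.

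The main obstacle is this non-vanishing argument. One must analyze where the Perron eigenclasses $\eta,\eta'$ can sit in the nef cone of $X$, expressing them inductively in terms of $h$ and the classes $E_{j}$ via the blowup tower; handle the degenerate possibility that $\eta\cdot\eta'=0$ as a curve class (in which case a separate argument involving the characteristic polynomial and the $f^{*}$-invariance of the integral lattice $H^{1,1}(X,\mathbb{Z})$ must be invoked); and verify that the numerical identities forced by $c_{1}(X_{j})\cdot C_{j}\neq 2(g_{j}-1)$ at each curve blowup exclude every configuration in which the triple intersection could vanish. Once this case analysis is carried out, $\lambda_{1}(f)=\lambda_{1}(f^{-1})=\lambda_{2}(f)$ follows immediately from the functional identity above.
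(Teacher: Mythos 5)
Your reduction at the start is fine and is essentially the paper's as well: $\lambda_2(f)=\lambda_1(f^{-1})$, Perron--Frobenius gives nef eigenclasses, and the invariance of $K_X$ under $f^*$ is the extra ingredient that the curve-blowup hypothesis is designed to exploit. But the pivotal step you rely on --- the non-vanishing of $\int_X \eta\cdot\eta'\cdot K_X$ --- is a genuine gap, and in fact your functional identity cannot detect it: since $f^*$ fixes $K_X$ and scales $\eta\cdot\eta'$ by $\lambda_1(f)/\lambda_1(f^{-1})$, the invariance of the cup pairing forces $\int_X\eta\cdot\eta'\cdot K_X=0$ automatically whenever $\lambda_1(f)\neq\lambda_1(f^{-1})$. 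So the identity is vacuous and the entire content of the proof is the non-vanishing, which you only sketch. The sketch does not go through as stated: the term $-4\,\eta\cdot\eta'\cdot h$ is non-positive (not non-negative) for nef classes, the quantities $\eta\cdot\eta'\cdot E_j$ have no definite sign in general, and expressing $\eta,\eta'$ ``in the natural basis arising from the blowup tower'' requires pushing them down the tower, where nef-ness is \emph{not} preserved under curve blowdowns. Moreover, a non-degeneracy statement involving two independent eigenclasses $\eta$ and $\eta'$ is strictly stronger than what the hypothesis $c_1(X_j)\cdot C\neq 2(g-1)$ plausibly delivers.

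The paper avoids all of this by proving a one-class statement (its Claim 1): for every nonzero $\eta$ in a suitable cone $\mathcal{C}(X)$ of weakly positive classes (stable under pushforward down the tower, unlike the nef cone), either $\eta\cdot\eta\neq 0$ or $K_X\cdot\eta\neq 0$. This is proved by induction on the tower: writing $\eta=p^*\xi-\alpha F$ with $\alpha\geq 0$ and intersecting the two equations $\eta\cdot\eta=0$ and $K_X\cdot\eta=0$ with $F$ yields $2\alpha\,\xi\cdot C=\alpha^2\gamma$ and $\alpha K_Y\cdot C-\xi\cdot C+\alpha\gamma=0$, which for $\alpha\neq 0$ combine to $c_1(Y)\cdot C=2g-2$, contradicting the hypothesis; hence $\alpha=0$ and one descends. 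Given Claim 1, a single nef eigenclass $\eta$ of $f^*$ produces an eigenvector of $f^*$ on $H^{2,2}$ of eigenvalue $\lambda_1(f)^2$ (namely $\eta\cdot\eta$) or $\lambda_1(f)$ (namely $K_X\cdot\eta$), giving $\lambda_2(f)\geq\lambda_1(f)$ directly, and the reverse inequality by applying the same to $f^{-1}$. If you want to salvage your approach, you should replace the two-class pairing with this one-class dichotomy; as written, the proposal leaves its essential step unproved.
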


If we incorporate the constructions from Theorem \ref{TheoremAutomorphismBlowupP3} below into Theorem \ref{TheoremDynamicalDegreesAutomorphismsBlowupP3}, we see that for "almost all" finite blowups $X\rightarrow \mathbb{P}^3$, if $f\in Aut(X)$ then its first and second dynamical degrees are the same. Next we construct many examples of finite blowups $X\rightarrow \mathbb{P}^3$ so that every element of $Aut(X)$ has zero topological entropy. In Corollary \ref{CorollaryAutomorphismGroupBlowupP3}, we show that most of the examples constructed in Theorem \ref{TheoremAutomorphismBlowupP3} satisfies the stronger constraint that their automorphism group $Aut(X)$ have only finitely many connected components.  

\begin{theorem}  
Let $X=X_n\rightarrow X_{n-1}\rightarrow \ldots \rightarrow X_1\rightarrow X_0=\mathbb{P}^3$ be a finite composition of blowups along smooth centers. Assume that $X_1\rightarrow X_0=\mathbb{P}^3$ is a blowup of a finite number of points $p_1,\ldots ,p_s\in \mathbb{P}^3$ and smooth curves $C_1,\ldots ,C_t\subset \mathbb{P}^3$ which are in general positions, that is no point belongs to a curve and two distinct curves are disjoint. 

Moreover, assume that each $X_{j+1}\rightarrow X_j$ ($1\leq j\leq n-1$) is either

1) A finite composition of blowups, the images in $X_j$ of the exceptional divisors are points;

or 

2) A blowup of $X_j$ along a smooth curve $C\subset X_j$ so that $\gamma =c_1(X_j).C+2g-2<0$, here $c_1(X_j)$ is the first Chern class of $X_j$ and $g$ is the genus of $C$. Moreover, assume that $C$ is not the unique effective curve in its cohomology class;

or 

3) A blowup of $X_j$ along a smooth curve $C$ contained in an irreducible hypersurface $S$ of $X_j$ so that $2\kappa <\mu \gamma$. Here $\kappa =S.C$, $1\leq \mu =$ the multiplicty of $C$ in $S$, and $\gamma =c_1(X_j).C+2g-2$ is the same as in 2).

Then, for every $f\in Aut(X)$, its dynamical degrees satisfy $\lambda _1(f)=\lambda _2(f)=1$. Therefore, by Gromov-Yomdin's theorem (see Theorem \ref{TheoremGromovYomdin} below), $h_{top}(f)=0$. 
\label{TheoremAutomorphismBlowupP3}\end{theorem}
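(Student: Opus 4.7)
The plan is to argue by induction on the number of blowup stages $n$. By Theorem~\ref{TheoremDynamicalDegreesAutomorphismsBlowupP3} (whose hypothesis is essentially implied by the present, stronger assumptions, up to a few trivial edge cases one can treat by hand) one has $\lambda_1(f)=\lambda_2(f)$, so it suffices to establish $\lambda_1(f)=1$; the conclusion $h_{\mathrm{top}}(f)=0$ then follows from Gromov--Yomdin. The base case $n=0$ is $X=\mathbb{P}^3$, where $\mathrm{Aut}(\mathbb{P}^3)=PGL_4(\mathbb{C})$ acts trivially on the one-dimensional $H^{1,1}(\mathbb{P}^3)$, so $\lambda_1=1$.

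For the inductive step the key claim is that any $f\in\mathrm{Aut}(X_n)$ must permute, as a set, the prime exceptional divisors of the final blowup $\pi_n:X_n\to X_{n-1}$. Once this is granted, $f$ preserves the exceptional locus of $\pi_n$ and descends through $\pi_n$ to a birational self-map $\bar f$ of $X_{n-1}$. Since $f$ contracts no divisor and respects $\pi_n$, $\bar f$ is a pseudo-automorphism of $X_{n-1}$ (nothing is contracted to higher codimension), and the standard invariance of dynamical degrees under such a descent yields $\lambda_1(f)=\lambda_1(\bar f)$; the induction hypothesis applied to $\bar f$ then gives $\lambda_1(f)=1$.

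The permutation claim is the technical heart and I would treat it case by case. In Case 1 the exceptional $\mathbb{P}^2$'s are characterized among effective prime divisors by their normal bundle $\mathcal{O}(-1)$ and triple self-intersection, numerical data that $f^*$ preserves. In Case 2 ($\gamma<0$) the exceptional divisor $E_C=\mathbb{P}(N_{C/X_j}^*)$ is rigid in $X_n$ at the divisor level because $\det N_{C/X_j}$ has negative degree; I would then use the hypothesis that $C$ is not the unique effective curve in its cohomology class to produce auxiliary effective divisors via strict transforms of curves cohomologous to $C$, whose intersection patterns with $E_C$ further isolate $E_C$ among all effective prime divisors of $X_n$. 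In Case 3 ($2\kappa<\mu\gamma$) I would analyze the strict transform $\tilde S=\pi_n^*S-\mu E_C$: the inequality translates into $\tilde S$ meeting the ruling fibers of $E_C\to C$ negatively, giving a numerical signature for $E_C$ that $f^*$ must respect.

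The main obstacle I anticipate is the delicate intersection-theoretic analysis in Cases 2 and 3, where the numerical hypotheses $\gamma<0$ (with $C$ non-unique in its class) and $2\kappa<\mu\gamma$ must be converted into intrinsic invariants strong enough to force $f(E_C)$ back into the exceptional locus. The non-uniqueness of $C$ in its class in Case 2 is the subtlest input: it is presumably needed to produce enough auxiliary test divisors in $X_n$ so that the intersection pattern truly isolates $E_C$ among \emph{all} effective prime divisors, not merely among the exceptional ones. Once the permutation property is secured, the descent to a pseudo-automorphism and the conclusion $\lambda_1(f)=1$ proceed by standard arguments.
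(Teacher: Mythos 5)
Your strategy has a structural flaw that cannot be repaired within the framework you set up. Even granting your permutation claim, the descended map $\bar f$ of $X_{n-1}$ is in general only a birational self-map (a pseudo-automorphism at best), not an automorphism: $f$ may permute the exceptional divisors of $\pi_n$ as a set without respecting the fibration over the centers, and the indeterminacy this creates cannot be wished away. The induction hypothesis is a statement about \emph{automorphisms} of $X_{n-1}$, so it does not apply to $\bar f$, and no weaker hypothesis about pseudo-automorphisms could be substituted: the very spaces covered by this theorem (and its close relatives, e.g.\ the Bedford--Kim space of Example~6) carry pseudo-automorphisms of positive entropy, so any reduction of the problem to a statement about pseudo-automorphisms of a smaller blowup is doomed from the start. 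The equality $\lambda_1(f)=\lambda_1(\bar f)$ therefore buys you nothing. In addition, the permutation claim itself --- which you correctly identify as the technical heart --- is left entirely unproven, and the numerical signatures you propose (normal bundle, triple self-intersection, negativity against ruling fibers) are invariants of $f^*$ acting on cohomology classes, not of $f$ acting on actual prime divisors; converting them into the statement that $f$ maps the exceptional locus to itself is precisely the kind of rigidity that fails already for $(-1)$-classes on blowups of $\mathbb{P}^2$, where the Weyl group mixes exceptional classes with classes of other curves.

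The paper's proof is entirely different and purely cohomological. It introduces the cone $\mathcal{B}(X)$ of psef classes with nonnegative intersection against every effective curve (and a relaxation $\mathcal{C}(X)$), proves pushforward lemmas for these cones under point and curve blowups, and establishes by induction on the tower the key \emph{Condition (A)}: any $\eta\in\mathcal{B}(X)$ with $\eta.\eta=0$ is proportional to a rational class. The numerical hypotheses $\gamma<0$ (with $C$ not unique in its class) and $2\kappa<\mu\gamma$ are used exactly here, to show that the coefficient $a$ of the exceptional divisor in $\eta=\pi^*(\xi)-aF$ must vanish when $\eta.\eta=0$ --- in case 3) via the effective curve $C_0=\widetilde S.F$ inside $F$ with $\eta.C_0\geq 0$. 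The conclusion then follows from a general criterion (Theorem~\ref{TheoremNonExistenceClassB}): a nef eigenvector $\eta$ for $\lambda_1(f)$ with $\eta.\eta=0$ would be proportional to a rational class, forcing $\lambda_1(f)$ to be rational, which is impossible for $\lambda_1(f)>1$ since $f^*$ is an integer matrix of determinant $\pm1$; hence $\eta.\eta\neq 0$, so $\lambda_2(f)\geq\lambda_1(f)^2$, and applying the same to $f^{-1}$ yields $\lambda_1(f)=\lambda_2(f)=1$. I would encourage you to abandon the geometric descent and work instead with eigenvectors of $f^*$ on the nef cone.
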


As a consequence we obtain the following
\begin{corollary}
Let $X=X_n\rightarrow X_{n-1}\rightarrow \ldots \rightarrow X_1\rightarrow X_0=\mathbb{P}^3$ be a finite composition of blowups along smooth centers. Assume that each $X_{j+1}\rightarrow X_j$ ($1\leq j\leq n-1$) is either one of the cases i), ii), iii) in Theorem \ref{TheoremAutomorphismBlowupP3}. Then the automorphism group $Aut(X)$ has only finitely many connected components. 
\label{CorollaryAutomorphismGroupBlowupP3}\end{corollary}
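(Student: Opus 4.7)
The goal is to prove that the natural representation $\rho\colon Aut(X)\to GL(H^{1,1}(X,\mathbb{Z}))$ has finite image. Combined with the classical theorem of Lieberman and Fujiki---that the kernel of $\rho$ is a finite extension of the identity component $Aut_0(X)$---this will yield the conclusion that $Aut(X)$ has only finitely many connected components.

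First I apply Theorem \ref{TheoremAutomorphismBlowupP3} directly: for every $f\in Aut(X)$ one has $\lambda_1(f)=\lambda_2(f)=1$, so the spectral radius of $\rho(f)$ acting on $H^{1,1}(X,\mathbb{R})$ is equal to $1$. Being an integer linear map preserving a lattice, Kronecker's theorem then forces every eigenvalue of $\rho(f)$ to be a root of unity.

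The next step, which I expect to be the main obstacle, is to promote this to a \emph{uniform} finite order, i.e.\ to rule out non-trivial unipotent (Jordan-block) behaviour and to bound the orders of the roots of unity that appear. The natural route is to exhibit an $Aut(X)$-invariant class $[\omega]$ in the interior of the K\"ahler cone of $X$. I would construct it by mining the proof of Theorem \ref{TheoremAutomorphismBlowupP3} for the following rigidity statement: the negativity conditions ($\gamma<0$ in cases 2) and 3)) together with the general position hypothesis in case 1) make the exceptional divisors $E_1,\ldots ,E_n$ of $\pi\colon X\to \mathbb{P}^3$ numerically distinguishable, so that $f^{*}$ can only permute their classes $[E_1],\ldots ,[E_n]$. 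The candidate class
\[
 [\omega]=\pi^{*}[H]-\sum_{i}\varepsilon_{i}\,[E_{i}],
\]
with $\varepsilon_i>0$ small and chosen to be constant along the $f^{*}$-orbits of the $[E_i]$, then lies in the K\"ahler cone, and the spectral radius $1$ bound applied to both $f^{*}$ and $(f^{-1})^{*}$ on the positive class $[\omega]$ forces $f^{*}[\omega]=[\omega]$ for every $f\in Aut(X)$.

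Once such an invariant K\"ahler class is in hand, Lieberman's theorem---that the stabilizer of a K\"ahler class in $Aut(X)$ has only finitely many connected components---applied to $Aut(X)=Aut_{[\omega]}(X)$ completes the proof.

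The hard part, as stressed above, is producing the invariant class $[\omega]$ from the conclusion of Theorem \ref{TheoremAutomorphismBlowupP3}. The bare equalities $\lambda_1(f)=\lambda_2(f)=1$ are not sufficient on their own: a $2\times 2$ unipotent integer matrix already satisfies both equalities yet generates an infinite subgroup of $GL(H^{1,1})$. What saves us is the blow-up geometry: the numerical characterization of each exceptional divisor---established in the course of the proof of Theorem \ref{TheoremAutomorphismBlowupP3} via the standard formulas for Chern classes and intersection numbers under blow-up---must be preserved by any automorphism, and this permutation action together with Kronecker's theorem is what ultimately forces finite order on the Picard-group level.
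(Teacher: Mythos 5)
There is a genuine gap, and it sits exactly where you flagged the ``main obstacle.'' Your plan correctly observes that the conclusion $\lambda_1(f)=\lambda_2(f)=1$ of Theorem \ref{TheoremAutomorphismBlowupP3} is not enough (a unipotent integral matrix has spectral radius $1$ but infinite order), but the device you propose to close this gap --- that the negativity hypotheses force $f^*$ to \emph{permute} the classes $[E_1],\ldots,[E_n]$ of the exceptional divisors, so that a class $\pi^*[H]-\sum_i\varepsilon_i[E_i]$ can be made invariant --- is not established anywhere in the proof of Theorem \ref{TheoremAutomorphismBlowupP3} and cannot simply be ``mined'' from it. That proof is entirely about classes $\eta\in\mathcal{B}(X)$ with $\eta.\eta=0$; it says nothing about how $f^*$ acts on the individual exceptional classes, and in general automorphisms of blowups do \emph{not} permute exceptional divisors (already for rational surface automorphisms, $f^*$ typically sends an exceptional class to the class of a strict transform of a line, not to another exceptional class). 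Moreover, even granting the permutation claim, your closing step --- ``the spectral radius $1$ bound applied to $f^*$ and $(f^{-1})^*$ on $[\omega]$ forces $f^*[\omega]=[\omega]$'' --- begs the question: ruling out polynomial (unipotent) growth of $f^{*n}[\omega]$ is precisely the point at issue, and spectral radius $1$ does not rule it out.

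The paper closes the gap differently, and you should compare. It does not use the conclusion of Theorem \ref{TheoremAutomorphismBlowupP3} at all, but a stronger byproduct of its \emph{proof}: under the hypotheses of the corollary one may take $X_1=X_0=\mathbb{P}^3$, and then the induction establishing Condition (A) (every step forces $\alpha=0$ and $\xi.\xi=0$, starting from $\mathbb{P}^3$ where $\eta.\eta=0$ implies $\eta=0$) shows that \emph{every non-zero nef class $\zeta$ on $X$ satisfies $\zeta.\zeta\neq 0$}. This open numerical condition on the nef cone is exactly the hypothesis of Theorem 1.1 of Bayraktar--Cantat \cite{bayraktar-cantat}, which is the result that actually kills the unipotent behaviour and yields finiteness of the number of connected components of $Aut(X)$ (combined, as in your plan, with Lieberman--Fujiki). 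So the correct repair of your argument is to replace the invariant-K\"ahler-class construction by the statement ``$\zeta$ nef, $\zeta\neq 0$ $\Rightarrow$ $\zeta.\zeta\neq 0$,'' proved by rerunning the induction of Theorem \ref{TheoremAutomorphismBlowupP3} with the base case $\mathbb{P}^3$, and then to cite \cite{bayraktar-cantat} rather than attempt the permutation argument.
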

Remark: The special case of finite point-blowups of $\mathbb{P}^3$ (i.e. all $X_{j+1}\rightarrow X_j$ is the case i) in Theorem \ref{TheoremAutomorphismBlowupP3}) was proved in the paper Bayraktar-Cantat \cite{bayraktar-cantat}.
\begin{proof}
In this case we can take $X_1=X_0=\mathbb{P}^3$ in Theorem \ref{TheoremAutomorphismBlowupP3}. The proof of Theorem \ref{TheoremAutomorphismBlowupP3} implies that if $\zeta$ is a non-zero nef class on $X$ then $\zeta .\zeta \not= 0$. Hence the proof of Theorem 1.1 in the paper \cite{bayraktar-cantat} implies that $Aut(X)$ has only finitely many connected components.  
\end{proof}

{\bf Remark:} 

-Even though we stated Theorems \ref{TheoremDynamicalDegreesAutomorphismsBlowupP3}, Corollary \ref{CorollaryAutomorphismGroupBlowupP3} and \ref{TheoremAutomorphismBlowupP3} only for $\mathbb{P}^3$, we can modify them to apply to other spaces, for example $\mathbb{P}^2\times \mathbb{P}^1$ or $\mathbb{P}^1\times \mathbb{P}^1\times \mathbb{P}^1$. Theorem \ref{TheoremDynamicalDegreesAutomorphismsBlowupP3} and Corollary \ref{CorollaryAutomorphismGroupBlowupP3} can be repeated verbatim, while the conclusions of Theorem \ref{TheoremAutomorphismBlowupP3} still hold if we do not include the space $X_1$ in the statement. This is necessary, since if $Z$ is an appropriate blowup of $\mathbb{P}^2$ at points in $\mathbb{P}^2$ then $Z$ has an automorphism $g$ of positive entropy (see McMullen \cite{mcmullen1}), therefore the space $Z\times \mathbb{P}^1$ has an automorphism of positive entropy as well. This space $Z\times \mathbb{P}^1$ is one of the spaces $X_1$ if we start from $X_0=\mathbb{P}^2\times \mathbb{P}^1$, yet it has an automorphism of positive entropy. We will show however that this example is compatible with our Theorem \ref{TheoremAutomorphismBlowupP3} in that the curves we blowup to form the space $Z\times \mathbb{P}^1$ do not satisfy both conditions 2) and 3) of Theorem \ref{TheoremAutomorphismBlowupP3}. We will also construct blowups of pairwise disjoint curves on $\mathbb{P}^2\times \mathbb{P}^1$ that do satisfy at least one of these conditions. For details please see the last section.        

-Conditions 2) and 3) in Theorem \ref{TheoremAutomorphismBlowupP3} are complement to each other: 2) is applied for $\gamma <0$ while 3) may be applied for $\gamma \geq 0$. The examples mentioned in the above paragraph show that conditions 2) and 3) are somewhat optimal, and there are cases when condition 2) does not apply while condition 3) does apply. 

-Consider condition 2) in Theorem \ref{TheoremAutomorphismBlowupP3}. Let $F\subset X_{j+1}$ be the exceptional divisor of the blowup $X_{j+1}\rightarrow X_j$, and let $M\subset F$ be a fiber of the projection $F\rightarrow C$. If there is a non-zero effective curve $V\subset F$ so that $F.V\geq 0$ then the requirement that $C$ is not the only effective curve in its cohomology class is not needed. For further comment on this, please see Lemma \ref{LemmaCaseTauNonNegative}.

-Consider condition 3) of Theorem \ref{TheoremAutomorphismBlowupP3}. Let $F\subset X_{j+1}$ be the exceptional divisor of the blowup $X_{j+1}\rightarrow X_j$, and let $M\subset F$ be a fiber of the projection $F\rightarrow C$. Then condition 3) implies the existence of an effective curve $C_0\subset F$ with the properties $C_0.C_0<0$ and $C_0.M>0$. Part e) of the proof of of Theorem \ref{TheoremAutomorphismBlowupP3} and Lemma \ref{LemmaIntersectionOnF} implies that Theorem \ref{TheoremAutomorphismBlowupP3} still holds if we replace condition 3) by the latter. For further comment on this please see Lemma \ref{LemmaCaseTauNonNegative}. 

-In condition 3) of Theorem \ref{TheoremAutomorphismBlowupP3}, given an irreducible curve $C\subset Y$, there is always a hypersurface $S\subset Y$ containing $C$. In fact, if $C$ is in the strict transform of an exceptional divisor then we can choose $S$ to be that hypersurface. Otherwise, $C$ is the strict transform of some curve $D\subset \mathbb{P}^3$. In this case we choose $S$ to be the strict transform of a hypersurface in $\mathbb{P}^3$ containing $D$.

-It is a natural and important problem to understand finer structures of the automorphisms of blowups of $\mathbb{P}^3$ (such as whether the automorphism groups of a generic space constructed in Theorem \ref{TheoremAutomorphismBlowupP3} is trivial, see also Question 1 below). Unfortunately we are not able to answer this in the current paper. 

There are many examples realizing the conditions of Theorem \ref{TheoremAutomorphismBlowupP3} (and Theorem  \ref{TheoremDynamicalDegreesAutomorphismsBlowupP3}). 

{\bf Example 1:} For condition 1), we blowup a point in $X_j$ and then we can blowup any number of points and curves on the exceptional divisor, and then can do iterated blowups on the resulting exceptional divisors and so on.

{\bf Example 2:} For condition 2), assume that we have a smooth curve $D$ on $X_j$ and another effective curve $D'$ in the cohomology class of $D$ so that $D$ and $D'$ intersect in a large enough number of points (counted with multiplicities). Then we blowup these intersection points (may need to do iterated blowup when the multiplicity is greater than $1$), and then blowup the strict transform of $D$. Another way of constructing is to blowup many curves having non-empty intersections with $D$ (see Example 6). 

{\bf Example 3:} Let $D$ be a smooth curve of degree $d\geq 2$ contained in a hyperplane $W$ of $\mathbb{P}^3$. If $Y\rightarrow \mathbb{P}^3$ is the blowup of $t$ points on $D$ (for any number $t$), and $C$ is the strict transform of $D$ then condition 3) is satisfied if we choose $S$ to be the strict transform of the hyperplane containing $D$. If in contrast, $D$ has degree $1$ (and therefore is a projective line), then we can apply Theorem \ref{TheoremDynamicalDegreesAutomorphismsBlowupP3} provided $t\not= 3$ (See Example 5 also).

{\bf Example 4:} Let $C_1$ and $C_2$ be two smooth curves, both belonging to the same hyperplane $W\subset \mathbb{P}^3$. Let $Y\rightarrow \mathbb{P}^3$ be the blowup at $C_1$, and let $X\rightarrow Y$ be the blowup at the strict transform of $C_2$. Then any automorphism of $X$ has zero entropy.

{\bf Example 5}: Let $Y\rightarrow \mathbb{P}^3$ be the blowup of $\mathbb{P}^3$ at $4$ points $e_0=[1:0:0:0]$, $e_1=[0:1:0:0]$, $e_2=[0:0:1:0]$ and $e_3=[0:0:0:1]$. For $0\leq i\not= j\leq 3$, let $\Sigma _{i,j}\subset \mathbb{P}^3$ be the line connecting $e_i$ and $e_j$. Let $\widetilde{\Sigma _{i,j}}$ be the strict transform in $Y$ of $\Sigma _{i,j}$. The curves $\widetilde{\Sigma _{i,j}}$ are pairwise disjoint. Let $X\rightarrow Y$ be the blowup of $Y$ along the curves $\widetilde{\Sigma _{i,j}}$. Then for every element $f$ of $Aut(X)$ we have $\lambda _1(f)=\lambda _2(f)$. (See Example 3 also).

{\bf Example 6}: Notations are as in Example 5. Let $X\rightarrow \mathbb{P}^3$ be the blowup of $\mathbb{P}^3$ at $e_1$ and $e_3$, followed by blowup of the strict transform of $\Sigma _{0,1}$ and then blowup of the strict transform of $\Sigma _{0,3}$. Then any automorphism of $X$ has zero topological entropy. Bedford and Kim \cite{bedford-kim3} constructed this space in connection with pseudo-automorphic linear fractional maps. Theorem \ref{TheoremAutomorphismBlowupP3} does not apply directly to this example but we can adapt the proof to it.

Theorem \ref{TheoremAutomorphismBlowupP3} gives support to the guess that the answer to the following question, asked by Professor Eric Bedford in a conference in Paris in Jun 2011, is No:

{\bf Question 1}: Is there a finite blowup $\pi :X\rightarrow \mathbb{P}^3$ and an automorphism $f:X\rightarrow X$ with $h_{top}(f)>0$?

We end this section giving a heuristic argument to explain why there are few automorphisms of positive entropies of projective (or more generally, compact K\"ahler) manifolds $X$ of dimension $3$ (and higher dimensions). Let $f\in Aut(X)$ and choose $\eta $ a non-zero nef-class (see the next section for definition of nef-classes) which is an eigenvector of eigenvalue $\lambda _1(f)$ of the linear map $f^*:H^{1,1}(X)\rightarrow H^{1,1}(X)$ (the existence of such an $\eta $ is assured by a Perron-Frobenius type theorem, see the proof of Theorem \ref{TheoremNonExistenceClassB}). As the proof of Theorem \ref{TheoremNonExistenceClassB} below shows, if for every $f\in Aut(X)$ we can choose such an $\eta $ so that $\eta .\eta \not= 0$ then every automorphism of $X$ has zero entropy. It is very unlikely to have $\eta .\eta =0$. In fact, by Poincare duality and Hodge decomposition, $dim (H^{1,1}(X))=dim(H^{2,2}(X))$. Denote by $n$ this dimension, let $x_1,\ldots ,x_n$ be a basis for $H^{1,1}(X)$ and let $y_1,\ldots ,y_n$ be a basis for $H^{2,2}(X)$. Then there are numbers $a_1,\ldots ,a_n$ so that $\eta =a_1x_1+\ldots +a_nx_n$. We can write $\eta .\eta =P_1(a_1,\ldots ,a_n)y_1+\ldots +P_n(a_1,\ldots ,a_n)y_n$, here $P_1(a_1,\ldots ,a_n),\ldots ,P_n(a_1,\ldots ,a_n)$ are homogeneous polynomials of degree $2$ in the variables $a_1,\ldots ,a_n$. The coefficients of these polynomials depend only on the intersection product on the cohomology groups of $X$. If $\eta .\eta =0$, then $P_1(a_1,\ldots ,a_n)=\ldots =P_n(a_1,\ldots ,a_n)=0$. The latter, being a system of $n$ homogeneous equations in $n$ variables, is expected to have only the solution $a_1=\ldots =a_n=0$, even when we do not take into account the fact that $\eta$ is nef and is an eigenvector of eigenvalue $\lambda _1(f)$ of $f^*$. 

{\bf Remarks.}

1. In recent works Bayraktar \cite{bayraktar} and Bayraktar-Cantat \cite{bayraktar-cantat}, the authors considered a more refined condition of that used in the above heuristic argument. More precisely, they considered the manifolds $X$ such that for  any non-zero nef class $\zeta \in H^{1,1}(X)$ then $\zeta ^{k-r+1}$ is non-zero, here $r$ is a fixed integer with $k>2r+2$. 

2. Combined with the results in \cite{bayraktar-cantat}, the above heuristic argument can be applied to prove a stronger conclusion: For "generic" compact K\"ahler manifolds $X$ of dimension $\geq 3$, the automorphism group $Aut(X)$ has only finitely many connected components.     

{\bf Acknowledgements.} The author is grateful to Professor Tien-Cuong Dinh for suggesting that the answer to Question 1 should be No. He thanks Professor Mattias Jonsson for checking an earlier version of this paper, for his useful comments and interest in the topic of the paper; and thanks Professor Viet-Anh Nguyen for checking an earlier version of this paper and for useful discussions. He thanks Professors Ekaterina Amerik and Eric Bedford for helpful discussions. He is also thankful to Professors Laura DeMarco, Roland Roeder, and Joseph Silverman for their interest in the topic of the paper.  

\section{Preliminaries on positive cohomology classes, blowups, dynamical degrees, and entropies}

\subsection{K\"ahler, nef and psef classes, and effective varieties}

Let $X$ be a compact K\"ahler manifold. Let $\eta \in H^{1,1}(X)$. We say that $\eta $ is K\"ahler if it can be represented by a K\"ahler $(1,1)$ form. We say that $\eta $ is nef if it is a limit of a sequence of K\"ahler classes. We say that $\eta$ is psef if it can be represented by a positive closed $(1,1)$ current. A class $\xi \in H^{p,p}(X)$ is an effective variety if there are irreducible varieties $C_1,\ldots ,C_t$ of codimension $p$ in $X$ and non-negative real numbers $a_1,\ldots ,a_t$ so that $\xi$ is represented by $\sum _{i}a_iC_i$.

Demailly and Paun \cite{demailly-paun} gave a characterization of K\"ahler and nef classes, which in the case of projective manifolds is summarized as follows:
\begin{theorem}
Let $X$ be a projective manifold with a K\"ahler $(1,1)$ form $\omega$. A class $\eta \in H^{1,1}(X)$ is K\"ahler if and only for any irreducible subvariety $V\subset X$ then $\int _V\eta ^{dim(V)}>0$.  A class $\eta \in H^{1,1}(X)$ is nef if and only for any irreducible subvariety $V\subset X$ then $\int _V\eta ^{dim(V)-j}\wedge \omega ^j\geq 0$ for all $0\leq j\leq dim (V)$.  
\label{TheoremDemaillyPaun}\end{theorem}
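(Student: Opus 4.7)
The forward direction is immediate: if $\eta$ is represented by a K\"ahler form $\alpha>0$, then the restriction $\alpha^{\dim V}|_V$ is a positive volume form on any irreducible subvariety $V$, so $\int_V \eta^{\dim V}>0$. The nef direction in the necessity statement follows by writing a nef class as a limit of K\"ahler classes $\eta+\tfrac{1}{n}\omega$ and passing to the limit in all mixed intersections against $\omega$.

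For the converse (the deep direction), my plan is to run a continuity argument in the positivity cone. Let $\mathcal{P}\subset H^{1,1}(X,\mathbb{R})$ denote the open cone of classes $\eta$ with $\int_V \eta^{\dim V}>0$ for every irreducible subvariety $V$, and let $\mathcal{K}$ denote the K\"ahler cone. Trivially $\mathcal{K}\subset\mathcal{P}$, and both are open convex cones, so it is enough to show that $\mathcal{K}$ is closed in $\mathcal{P}$, i.e.\ that a limit of K\"ahler classes that lies in $\mathcal{P}$ is already K\"ahler. Equivalently, given $\eta\in\mathcal{P}$ one has to produce a genuine K\"ahler representative.

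The main analytic tool for this is Demailly's mass concentration technique. Using the hypothesis $\int_X \eta^n>0$, one solves a singular Monge-Amp\`ere equation to produce a closed positive $(1,1)$-current $T\in\{\eta\}$ satisfying $T\geq\delta\omega$ for some $\delta>0$; such a $T$ is a K\"ahler current. Demailly's regularization then yields a sequence $T_k\in\{\eta\}+\epsilon_k\{\omega\}$ of K\"ahler currents with analytic singularities along subschemes $Z_k\subset X$, with $\epsilon_k\to 0$. The task is then to remove these singularities.

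This removal step is the central obstacle and where the full strength of the positivity hypothesis on \emph{every} subvariety enters. One argues by induction on dimension: restrict $T_k$ to each irreducible component $V$ of $Z_k$, and invoke the hypothesis $\int_V \eta^{\dim V}>0$ together with Siu's decomposition of closed positive currents, upper-semicontinuity of Lelong numbers, and intersection inequalities for currents with analytic singularities to propagate the strict positivity from $V$ back to a neighbourhood. This lets one kill the singular locus of $T_k$ in a controlled way, and in the limit produces a smooth K\"ahler form in the class $\eta$, giving $\eta\in\mathcal{K}$. The nef half of the statement is then a corollary: apply the K\"ahler criterion to $\eta+\epsilon\omega$ for small $\epsilon>0$, expand $(\eta+\epsilon\omega)^{\dim V}$ in powers of $\epsilon$, and read off that non-negativity of every coefficient is exactly the stated mixed positivity condition; letting $\epsilon\to 0$ finishes the proof.
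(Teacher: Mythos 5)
This theorem is not proved in the paper at all: it is quoted verbatim from Demailly--Paun \cite{demailly-paun} as background, so there is no internal argument to compare yours against. Your sketch does track the strategy of the original Annals paper (produce a K\"ahler current in the class via a degenerate Monge--Amp\`ere equation using $\int_X\eta^{\dim X}>0$, regularize to currents with analytic singularities, then induct on dimension to remove the singular locus using the positivity hypothesis on every subvariety). But as a proof it has genuine gaps. The most concrete one is the claim that the cone $\mathcal{P}$ of classes with $\int_V\eta^{\dim V}>0$ for all $V$ is convex: it is not, since $\eta\mapsto\int_V\eta^{p}$ is not a concave or convex condition for $p\geq 2$. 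Consequently, showing that $\mathcal{K}$ is open and closed in $\mathcal{P}$ only proves that $\mathcal{K}$ is \emph{one connected component} of $\mathcal{P}$ --- which is precisely Demailly--Paun's main theorem --- and does not by itself give $\mathcal{K}=\mathcal{P}$. The passage from the connected-component statement to the equality $\mathcal{K}=\mathcal{P}$ in the projective case is a separate (nontrivial) step in \cite{demailly-paun}, and your argument is silent on why the given class $\eta$ lies in the component containing the ample classes.

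The second gap is that the entire analytic core --- killing the singularities of the K\"ahler currents by restricting to the components $V$ of the singular loci, invoking the induction hypothesis on the (possibly singular) $V$, and gluing local potentials to propagate strict positivity to a neighbourhood --- is asserted in one sentence rather than carried out; this is where essentially all of the difficulty of the theorem lives (including the need to pass to desingularizations of $V$ and to control Lelong numbers uniformly). Finally, a small logical slip in the nef half: positivity of the polynomial $\epsilon\mapsto\int_V(\eta+\epsilon\omega)^{\dim V}$ for all $\epsilon>0$ does \emph{not} let you ``read off'' non-negativity of each coefficient $\int_V\eta^{\dim V-j}\wedge\omega^{j}$; that direction must come from the limit-of-K\"ahler-classes argument you already gave for necessity, while the sufficiency direction is the one that follows from expanding in $\epsilon$ (all coefficients $\geq 0$ and the leading one $\int_V\omega^{\dim V}>0$ force positivity). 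Given that the paper itself treats this as a black-box citation, the appropriate ``proof'' here is simply the reference; a from-scratch argument would need the two missing ingredients above to be supplied.
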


Nef classes are preserved under pullback by holomorphic maps. 
\begin{lemma}
Let $\pi :X\rightarrow Y$ be a holomorphic map between compact K\"ahler manifolds. Then $\pi ^*(H^{1,1}_{nef}(X))\subset H^{1,1}_{nef}(Y)$.
\label{LemmaPullbackNefClasses}\end{lemma}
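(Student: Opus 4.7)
The plan is to work directly from the definition of nef, since the statement (properly read as $\pi^{*}(H^{1,1}_{nef}(Y))\subset H^{1,1}_{nef}(X)$) is a standard approximation argument. Fix a nef class $\eta\in H^{1,1}(Y)$; by definition there exists a sequence of K\"ahler classes $\eta_k\in H^{1,1}(Y)$ with $\eta_k\to\eta$ in $H^{1,1}(Y)$. Pick smooth K\"ahler representatives $\omega_k$ of $\eta_k$. Since $\pi^{*}\colon H^{1,1}(Y)\to H^{1,1}(X)$ is continuous (it is a linear map between finite-dimensional spaces), it suffices to show that each individual class $\pi^{*}\eta_k$ is nef; then $\pi^{*}\eta$ will be a limit of nef classes, and the nef cone in $H^{1,1}(X)$ is closed.

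To see $\pi^{*}\eta_k$ is nef, fix a K\"ahler form $\omega_X$ on $X$ and consider, for each $\varepsilon>0$, the smooth real $(1,1)$-form
\begin{equation*}
\alpha_{k,\varepsilon}:=\pi^{*}\omega_k+\varepsilon\,\omega_X.
\end{equation*}
Because $\pi$ is holomorphic, $\pi^{*}\omega_k$ is a smooth real $(1,1)$-form that is pointwise semi-positive (positive on vectors that are not in the kernel of $d\pi$). Adding the strictly positive form $\varepsilon\omega_X$ yields a pointwise strictly positive smooth $(1,1)$-form, i.e.\ a K\"ahler form on $X$. Its cohomology class is $\pi^{*}\eta_k+\varepsilon[\omega_X]$, which converges to $\pi^{*}\eta_k$ as $\varepsilon\to 0^{+}$. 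Hence $\pi^{*}\eta_k$ is the limit of K\"ahler classes and is therefore nef.

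Letting $k\to\infty$ and using the closedness of the nef cone completes the argument. The only step that requires the slightest care is the pointwise verification that $\pi^{*}\omega_k+\varepsilon\omega_X$ is a genuine K\"ahler form; this is immediate from the holomorphicity of $\pi$ (so that the pullback of a semi-positive $(1,1)$-form is still semi-positive) together with positivity of $\omega_X$. I do not anticipate a genuine obstacle: no projectivity, properness beyond compactness, or Demailly--Paun characterization is needed, and the argument applies uniformly regardless of whether $\pi$ is surjective, a submersion, or has positive-dimensional fibers.
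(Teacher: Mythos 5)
Your proof is correct and follows essentially the same route as the paper: reduce to the case of a K\"ahler class on $Y$ by closedness of the nef cone, observe that the pullback of a K\"ahler form under a holomorphic map is a semi-positive smooth $(1,1)$-form, and perturb by $\varepsilon\,\omega_X$ to exhibit $\pi^{*}\eta_k$ as a limit of K\"ahler classes. You also correctly read the statement with $X$ and $Y$ swapped (the pullback goes from $H^{1,1}(Y)$ to $H^{1,1}(X)$), which is how the paper's own proof treats it.
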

\begin{proof}
Since nef classes are in the closure of K\"ahler classes, it suffices to show that if $\eta$ is a K\"ahler class then $\pi ^*(\eta )$ is nef. Let $\varphi $ be a K\"ahler $(1,1)$ form representing $\eta$. Then $\pi ^*(\varphi )$ is a positive smooth $(1,1)$ form. Let $\omega _X$ be a K\"ahler $(1,1)$ form on $X$. Then $\pi ^*(\eta )$ is represented as a limit of the following K\"ahler classes
\begin{eqnarray*} 
\pi ^*(\varphi )+\frac{1}{n}\omega _X,
\end{eqnarray*}
and hence is nef.
\end{proof}

{\bf Remark:} Similarly, it can be shown that psef classes are preserved under pushforward by holomorphic maps. However, nef classes may not be preserved under pushforwards, even when the map is a blowup.  

\subsection{Blowup of a projective $3$-manifold at a point}

Let $\pi :X\rightarrow Y$ be the blowup of a projective $3$-manifold at a point $p$. 
Let $E= \mathbb{P}^2$ be the exceptional divisor and let $L\subset E$ be a line. Then $H^{1,1}(X)$ is generated by $\pi ^*(H^{1,1}(Y))$ and $E$, and $H^{2,2}(X)$ is generated by $\pi ^*(H^{2,2}(Y))$ and $L$. The intersection product on the cohomology of $X$ is given by 
\begin{eqnarray*}
\pi ^*(\xi ).E=0,~E.E=-L,\\
\pi ^*(\xi ).L=0,~E.L=-1. 
\end{eqnarray*}

The first and second Chern classes of $X$ can be computed by (see e.g. Section 6, Chapter 4 in the book of Griffiths-Harris \cite{griffiths-harris})
\begin{eqnarray*}
c_1(X)&=&\pi ^*(c_1(Y))-2E,\\
c_2(X)&=&\pi ^*(c_2(Y)).
\end{eqnarray*}

The following result concerns the relations between cycles on $X$ and $Y$.
\begin{lemma}
 For any effective curve $V\subset Y$, there is an effective curve $\widetilde{V}\subset X$ so that $\pi _*(\widetilde{V})=V$ and $\widetilde{V}.E\geq 0$.  
\label{LemmaPushforwardCyclesBlowupPoint}\end{lemma}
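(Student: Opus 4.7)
The plan is to take $\widetilde{V}$ to be the \emph{strict transform} of $V$ under $\pi$. By linearity of pushforward and of the intersection product, it suffices to treat the case of an irreducible curve $V$, since for $V=\sum a_iV_i$ with each $V_i$ irreducible and each $a_i\geq 0$ one can set $\widetilde{V}=\sum a_i\widetilde{V_i}$ and both properties in the conclusion pass through the sum.

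For an irreducible $V$ I would define $\widetilde{V}$ to be the closure of $\pi^{-1}(V\setminus\{p\})$ in $X$. Then $\widetilde{V}$ is an effective irreducible curve, and because $V$ is a curve (in particular not the point $p$), $\pi$ restricts to a birational morphism $\widetilde{V}\to V$, which gives $\pi_*\widetilde{V}=V$.

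To verify $\widetilde{V}\cdot E\geq 0$ I would argue that the intersection is proper: no component of $E$ is contained in $\widetilde{V}$ by the very definition of strict transform, so $\widetilde{V}\cap E$ is either empty (if $V$ avoids $p$) or a finite set of points, namely the projectivized tangent directions of $V$ at $p$ viewed inside $E\cong\mathbb{P}^2$. Since $\widetilde{V}$ and $E$ are effective cycles meeting properly in the smooth manifold $X$, their intersection number equals the length of the zero-dimensional scheme-theoretic intersection, which is non-negative. Equivalently, using the intersection formulas stated just above the lemma, if $m\geq 0$ denotes the multiplicity of $V$ at $p$ then $\pi^*[V]=[\widetilde{V}]+m[L]$ in $H^{2,2}(X)$, and intersecting with $E$ together with $\pi^*[V]\cdot E=0$ and $L\cdot E=-1$ yields $\widetilde{V}\cdot E=m\geq 0$. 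The lemma is essentially a standard fact about strict transforms under a point blowup, so there is no substantive obstacle; the mildest subtlety is justifying the identity $\pi^*[V]-[\widetilde{V}]=m[L]$ with $m\geq 0$, which follows from the local model of the blowup.
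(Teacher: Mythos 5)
Your proposal is correct and follows essentially the same route as the paper: reduce to the irreducible case, take the strict transform, and observe that since $\widetilde{V}$ is not contained in $E$ the intersection $\widetilde{V}.E$ is proper and hence non-negative. The extra cohomological verification via $\pi^*[V]=[\widetilde{V}]+m[L]$ is a fine supplement but not needed; the paper stops at the properness observation.
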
  
\begin{proof}
It suffices to consider the case when $V$ is an irreducible curve. We can choose $\widetilde{V}$ to be the strict transform of $V$. Then $\pi _*(\widetilde{V})=V$, and $\widetilde{V}$ is not contained in $E$. Therefore $\widetilde{V}.E\geq 0$.  
\end{proof}

We end this subsection showing that nef classes are preserved under pushforward by point-blowups. 
\begin{lemma}
Let $\eta \in H^{1,1}_{nef}(X)$. Then $\pi _*(\eta )\in H^{1,1}_{nef}(Y)$.
\label{LemmaPushforwardNefBlowupPoint}\end{lemma}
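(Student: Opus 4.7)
The plan is to establish a decomposition $\eta = \pi^{*}\alpha + cE$ with $\alpha := \pi_{*}\eta$, and then verify Demailly--Paun's nef criterion (Theorem \ref{TheoremDemaillyPaun}) for $\alpha$ on $Y$. The decomposition is valid because $H^{1,1}(X) = \pi^{*}H^{1,1}(Y) \oplus \mathbb{R}[E]$ together with $\pi_{*}\pi^{*} = \mathrm{id}$ on cohomology and $\pi_{*}[E] = 0$ (as $E$ is contracted to the point $p$). Before attacking the criterion, I would establish the sign constraint $c \leq 0$: testing $\eta$ against a line $L \subset E$ and using $\pi^{*}\alpha.L = 0$, $E.L = -1$ gives $0 \leq \eta.L = -c$.

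To verify Demailly--Paun, I fix a K\"ahler class $\omega_{Y}$ on $Y$ and, for every irreducible subvariety $V \subset Y$ of dimension $d$ and every $0 \leq j \leq d$, check $\int_{V}\alpha^{d-j}\wedge\omega_{Y}^{j} \geq 0$. The case $d = 1$ is handled directly by Lemma \ref{LemmaPushforwardCyclesBlowupPoint}: take the strict transform $\widetilde V \subset X$ with $\widetilde V.E \geq 0$, and the projection formula gives
\[
\alpha.V \;=\; \pi^{*}\alpha.\widetilde V \;=\; \eta.\widetilde V \;-\; c(E.\widetilde V),
\]
where both summands are $\geq 0$ since $\eta$ is nef, $c \leq 0$, and $\widetilde V.E \geq 0$. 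For $d = 2, 3$ I pass the computation to $X$ using $\pi^{*}[V] = [\widetilde V] + m[E]$ (where $m \geq 0$ is the multiplicity of $V$ at $p$; $m = 0$ when $V = Y$ or when $V$ is a curve off the blow-up center):
\[
\int_{V}\alpha^{d-j}\omega_{Y}^{j} \;=\; \int_{X} (\eta - cE)^{d-j}(\pi^{*}\omega_{Y})^{j} \cdot ([\widetilde V] + m[E]).
\]
Expanding this and using the intersection identities $\pi^{*}\xi.E = 0$ for $\xi \in H^{1,1}(Y)$ (whence $\pi^{*}\omega_{Y}.E^{k} = 0$ for all $k \geq 1$), $E^{2} = -L$, $E^{3} = 1$, and $\eta.L = -c$, one finds that the right-hand side collapses to $\int_{\widetilde V}\eta^{d-j}(\pi^{*}\omega_{Y})^{j}$ plus a polynomial in $-c \geq 0$ and $m \geq 0$ with non-negative coefficients. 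The integral over $\widetilde V$ is $\geq 0$ because $\eta$ and $\pi^{*}\omega_{Y}$ are both nef on $X$ (Lemma \ref{LemmaPullbackNefClasses}) and $\widetilde V$ is irreducible: the pure-$\eta$ case is Demailly--Paun on $X$, and the mixed terms are handled by approximating $\eta$ by K\"ahler classes, so that the resulting top-degree product on $\widetilde V$ is semi-positive.

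The main obstacle is the $d = 2$ book-keeping: the expansion of $(\eta - cE)^{2}\cdot ([\widetilde V] + m[E])$ produces half a dozen cross-terms, and one must check, using $\eta^{2}.E = c^{2}$, $\eta.E^{2} = c$, and $E.\widetilde V = mL$ (this last from $[\widetilde V] = \pi^{*}[V] - m[E]$), that they collapse to $\int_{\widetilde V}\eta^{2} + c^{2}m \geq 0$ when $j = 0$ and to $\int_{\widetilde V}\eta \wedge \pi^{*}\omega_{Y} \geq 0$ when $j = 1$. The $d = 3$ case, $V = Y$, is the cleanest: $\int_{Y}\alpha^{3}$ reduces to $\int_{X}\eta^{3} - c^{3} \geq 0$, and the remaining K\"ahler-mixed integrals $\int_{Y}\alpha^{2}\omega_{Y}$ and $\int_{Y}\alpha\,\omega_{Y}^{2}$ reduce analogously since all terms containing $E$ vanish against $\pi^{*}\omega_{Y}$.
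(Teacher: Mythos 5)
Your argument is correct, and I checked the bookkeeping you flagged as the main obstacle: with $\eta=\pi^{*}\alpha+cE$, $c\leq 0$, one indeed has $\eta^{2}.E=c^{2}$, $\eta.E^{2}=c$, $E^{3}=1$, $E.\widetilde V=mL$, $L.\widetilde V=m$, so the $mE$-part of $(\eta-cE)^{2}.(\,[\widetilde V]+m[E])$ contributes $m(c^{2}-2c^{2}+c^{2})=0$ and the $\widetilde V$-part gives $\int_{\widetilde V}\eta^{2}+c^{2}m\geq 0$; the $j\geq 1$ and $d=3$ cases collapse as you say because $L.\pi^{*}\omega_{Y}=0$. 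However, your route is genuinely different from the paper's. The paper does not decompose $\eta$ in cohomology at all: it reduces to $\eta$ K\"ahler, represents it by a K\"ahler form $\varphi$, observes that $\pi_{*}\varphi$ is a positive closed $(1,1)$-current smooth off the center, and invokes Demailly's intersection theory of positive currents to see that each $\pi_{*}(\varphi)^{\dim V-j}\wedge[V]\wedge\omega_{Y}^{j}$ is a positive measure whose mass is the intersection number required by Theorem \ref{TheoremDemaillyPaun}. That analytic argument is shorter and in principle applies to any proper modification for which the pushforward current has controlled potentials, but it leans on nontrivial pluripotential theory (well-definedness of those wedge products). Your argument is purely numerical and self-contained given the intersection table of a point blowup (it uses $E\cong\mathbb{P}^{2}$, $E^{2}=-L$, $E^{3}=1$, and $\pi^{*}V=\widetilde V+mE$ for hypersurfaces), at the cost of being specific to this geometry and requiring the case-by-case expansion; both proofs ultimately rest on the hard direction of Demailly--Paun applied on $Y$, while you additionally use the easy direction on $X$ (products of nef classes over irreducible subvarieties are $\geq 0$, by approximating each factor by K\"ahler classes) where the paper uses positivity of currents.
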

\begin{proof}
It suffices to prove the conclusion when $\eta$ is a K\"ahler class. Let $\varphi$ be a K\"ahler $(1,1)$ form representing $\eta$. Then $\pi _*(\varphi )$ is a positive closed $(1,1)$ current, which is smooth on $X-p$. 

Let $\omega _Y$ be a K\"ahler $(1,1)$ form on $Y$. To show that $\pi _*(\eta )$ is a nef class, by Theorem \ref{TheoremDemaillyPaun} it suffices to show that for any irreducible variety $V\subset Y$ then $\pi _*(\eta )^{dim(V)-j}.V.\omega _Y^j\geq 0$ for $0\leq j\leq dim (V)$. We let $[V]$ be the current of integration on $V$. Then by the results in Section 4, Chapter 3 in the book of Demailly \cite{demailly}, the current $\pi _*(\varphi )^{dim(V)-j}\wedge [V]\wedge \omega _Y^j$ is well-defined and is a positive measure, whose mass equals to $\pi _*(\eta )^{dim(V)-j}.V.\omega _Y^j$. Thus the latter quantity is non-negative.  
\end{proof}

\subsection{Blowup of a projective $3$-manifold along a smooth curve}
Let $\pi :X\rightarrow Y$ be the blowup of a projective $3$-manifold along a smooth curve $C\subset Y$. Let $g$ be the genus of $C$. Let $F$ be the exceptional divisor and let $M$ be a fiber of the projection $F\rightarrow C$. We can identify $F$ with the projective bundle $\mathbb{P}(\mathcal{E})\rightarrow C$, where $\mathcal{E}=N_{C/Y}\rightarrow C$ is the normal vector bundle of $C$ in $Y$.   

Then $H^{1,1}(X)$ is generated by $\pi ^*(H^{1,1}(Y))$ and $F$, and $H^{2,2}(X)$ is generated by $\pi ^*(H^{2,2}(Y))$ and $M$. The intersection between $F$ and $M$ is $F.M=-1$. The first and second Chern classes of $X$ can be computed as follows:
\begin{eqnarray*}
c_1(X)&=&\pi ^*(c_1(Y))-F,\\
c_2(X)&=&\pi ^*(c_2(Y)+C)-\pi ^*c_1(Y).F.
\end{eqnarray*}

Let $[F]\rightarrow X$ be the line bundle of $F$ in $X$, and denote by $e=[F]|_F$. Then (see e.g. Section 6, Chapter 4 in the book of Griffiths - Harris \cite{griffiths-harris}) in $F$ we have the equalities 
\begin{eqnarray*}
e.M=-1,~e.e=-c_1(\mathcal{E}).
\end{eqnarray*}  
From the SES of vector bundles on $C$ 
\begin{eqnarray*}
0\rightarrow T_C\rightarrow T_Y|_C\rightarrow \mathcal{E}\rightarrow 0,
\end{eqnarray*}
it follows by the additivity of first Chern classes that 
\begin{eqnarray*}
c_1(\mathcal{E})=c_1(T_Y).C-c_1(T_C)=c_1(Y).C+2g-2.
\end{eqnarray*}
We define 
\begin{eqnarray*}
\gamma :=c_1(Y).C+2g-2.
\end{eqnarray*} 
Since $F\rightarrow C$ is a ruled surface (i.e. its fibers are projective lines $\mathbb{P}^1$), there is a canonical section $C_0$ which is the image of a holomorphic map $\sigma _0:C\rightarrow F$ (see e.g. Section 2, Chapter 5 in Hartshorne's book \cite{hartshorne}). Therefore $C_0$ is an effective curve in $F$. Such a $C_0$ has intersection $1$ with a fiber $M$.

We will return to the canonical section $C_0$ at the end of this subsection. For now, we however work in a more general assumption on $C_0$, for using later. That is, we consider an effective curve $C_0\subset F$ with the following properties  
\begin{eqnarray*}
C_0.C_0&=&\tau ,\\
C_0.M&=&\mu >0,\\
M.M&=&0.
\end{eqnarray*}   
Any divisor on $F$ is numerically equivalent to a linear combination of $C_0$ and $M$. We now show the following 

\begin{lemma}

a) 
\begin{equation}
F.C_0=\frac{1}{2}(\gamma \mu -\frac{\tau}{\mu}).
\label{Equation1}\end{equation}

b) \begin{eqnarray*}
F.F=-\frac{1}{\mu}C_0+\frac{1}{2}(\frac{\tau}{\mu ^2}+\gamma )M.
\end{eqnarray*}

c) $\pi _*(F.F)=-C$.
\label{LemmaIntersectionOnF}\end{lemma}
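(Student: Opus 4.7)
The plan is to reduce everything to linear algebra in the Néron--Severi group of the ruled surface $F$, using the self-intersection formula $F.F = e$ (viewed as a $1$-cycle on $F$ pushed into $X$) together with the known intersection numbers $e.M = -1$ and $e.e = -c_1(\mathcal{E}) = -\gamma$ recorded in the paper.

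First I would handle part (b). Every divisor on $F$ is numerically equivalent to a combination of $C_0$ and $M$, so write $e = aC_0 + bM$. Intersecting with $M$ gives $-1 = e.M = a(C_0.M) + b(M.M) = a\mu$, hence $a = -1/\mu$. Self-intersecting gives
\begin{equation*}
-\gamma = e.e = a^2\tau + 2ab\mu = \frac{\tau}{\mu^2} - 2b,
\end{equation*}
so $b = \tfrac{1}{2}(\tau/\mu^2 + \gamma)$. Since $F.F$ as a class in $X$ is exactly the pushforward of $e$ along $F\hookrightarrow X$, this yields (b).

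Next, part (a) falls out by intersecting the formula in (b) with $C_0$ and using $C_0.C_0 = \tau$ and $C_0.M = \mu$:
\begin{equation*}
F.C_0 = -\frac{\tau}{\mu} + \frac{\mu}{2}\Bigl(\frac{\tau}{\mu^2}+\gamma\Bigr) = \frac{1}{2}\Bigl(\gamma\mu - \frac{\tau}{\mu}\Bigr).
\end{equation*}

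Finally, for (c) I would push the formula in (b) forward via $\pi$. The fiber $M$ is contracted to a point so $\pi_*M = 0$, while $C_0$ maps onto $C$ with degree equal to its intersection with a fiber, namely $C_0.M = \mu$, so $\pi_*C_0 = \mu\cdot C$. Thus $\pi_*(F.F) = -\tfrac{1}{\mu}(\mu C) + 0 = -C$. No step is really an obstacle; the only subtlety worth stating carefully is the identification $F.F = e$, which is the standard self-intersection/adjunction identity for a smooth divisor in a smooth ambient manifold, applied to $F\subset X$ with normal bundle $[F]|_F = \mathcal{O}_F(e)$.
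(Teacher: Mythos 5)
Your proof is correct and follows essentially the same route as the paper's: both determine $e=[F]|_F$ in the basis $\{C_0,M\}$ from $e.M=-1$ and $e.e=-\gamma$, identify $F.F$ with $e$, and push forward using $\pi_*M=0$, $\pi_*C_0=\mu C$. The only cosmetic difference is that you establish (b) first and derive (a) by intersecting with $C_0$, whereas the paper computes $F.C_0=e.C_0$ directly and then notes that (b) follows from the same formula for $e$.
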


\begin{proof} 

a) In fact, we have
\begin{eqnarray*}
F.C_0=[F]|_{C_0}=[F]|_{F}.C_0=e.C_0,
\end{eqnarray*}
here the two expressions on the RHS are computed in $F$. On $F$, numerically we can write $e=aC_0+bM$. Then from $-1=e.M=(aC_0+bM).M=a\mu$, we get $a=-1/\mu$. Substitute  this into $e.e=-\gamma$ we obtain 
\begin{eqnarray*}
-\gamma =e.e=(\frac{1}{\mu}C_0-bM).(\frac{1}{\mu}C_0-bM)=\frac{\tau}{\mu ^2}-2b, 
\end{eqnarray*} 
which implies that 
\begin{eqnarray*}
b=\frac{1}{2}(\frac{\tau}{\mu ^2}+\gamma ). 
\end{eqnarray*}

Therefore 
\begin{eqnarray*}
e=\frac{-1}{\mu}C_0+\frac{1}{2}(\frac{\tau}{\mu ^2}+\gamma )M.
\end{eqnarray*}
Thus
\begin{eqnarray*}
F.C_0&=&e.C_0=[\frac{-1}{\mu}C_0+\frac{1}{2}(\frac{\tau}{\mu ^2}+\gamma )M]C_0\\
&=&\frac{-\tau}{\mu}+\frac{1}{2}(\frac{\tau}{\mu}+\gamma \mu )\\
&=&\frac{1}{2}(-\frac{\tau}{\mu}+\gamma \mu ).
\end{eqnarray*}

b) From the formula for $e$ in the proof of a) it is not difficult to arrive at the proof of b).

c) Since $C_0.M=\mu$, it follows that $\pi _*(C_0)=\mu C$. Then from b) we obtain c). 
\end{proof}

We end this subsection commenting on conditions 2) and 3) of Theorem \ref{TheoremAutomorphismBlowupP3}. By Proposition 2.8 in Chapter 5 of \cite{hartshorne}, there is a line bundle $\mathcal{M}\rightarrow C$ so that the vector bundle $\mathcal{E}'=\mathcal{E}\otimes \mathcal{M}$ is normalized in the following sense: $H^0(\mathcal{E}')\not= 0$ but for all line bundle $\mathcal{L}\rightarrow C$ with $c_1(\mathcal{L})<0$ then $H^0(\mathcal{E}'\otimes \mathcal{L})=0$. A canonical section $C_0\subset F$ can be associated to such a normalized $\mathcal{E}'$. The intersection between $C_0$ and $M$ is $1$. Moreover, the number 
\begin{eqnarray*}
\tau _0=C_0.C_0=c_1(\mathcal{E}')=c_1(\mathcal{E})+2c_1(\mathcal{M}),   
\end{eqnarray*}
is an invariant of $F$.

Condition 3) of Theorem \ref{TheoremAutomorphismBlowupP3} implies the existence of an effective curve $V\subset F$ for which $V.V<0$ and $V.M>0$. We now show that such an effective curve exists if and only if the invariant $\tau _0$ is $<0$.   In condition 2) of Theorem \ref{TheoremAutomorphismBlowupP3}, the requirement that $C$ is not the only effective curve in its cohomology class is not needed if there exists a non-zero effective curve $V\subset F$ so that $F.V\geq 0$. We now also show that if $\gamma <0$ and $\tau _0\geq 0$ then such a curve $V$ does not exist. 

\begin{lemma}
Assume that the invariant $\tau _0$ of $F$ is non-negative. Then

a) For any effective curve $V\subset F$ we have $V.V\geq 0$.

b) If moreover $\gamma <0$ then for any non-zero effective curve $V\subset F$ we have $F.V<0$.
\label{LemmaCaseTauNonNegative}\end{lemma}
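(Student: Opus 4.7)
The plan is to reduce both statements to the case where $V$ is irreducible, then handle the three types of irreducible curves on the ruled surface $F=\mathbb{P}(\mathcal{E}')\to C$ separately. Write any effective curve as $V = \sum_i n_i V_i$ with $n_i > 0$ and $V_i$ distinct irreducible. Since distinct irreducible curves on a smooth surface meet non-negatively,
\begin{eqnarray*}
V.V &=& \sum_i n_i^2 \, V_i.V_i + 2\sum_{i<j} n_i n_j \, V_i.V_j,\\
F.V &=& \sum_i n_i \, F.V_i,
\end{eqnarray*}
so (a) reduces to showing $V_i.V_i \geq 0$ for each irreducible $V_i$, while (b) reduces to showing $F.V_i < 0$ for each irreducible $V_i$ (since at least one $n_i$ is positive).

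For the case analysis I use the formula $e = F|_F = -C_0 + \tfrac{1}{2}(\tau_0+\gamma)M$, which is Lemma \ref{LemmaIntersectionOnF} specialized to the canonical section ($\mu = 1$, $\tau = \tau_0$). The two easy cases are $V = C_0$, giving $V.V = \tau_0 \geq 0$ and $F.V = e.C_0 = (\gamma-\tau_0)/2 < 0$; and $V = M$, giving $V.V = 0$ and $F.V = -1 < 0$. For a remaining irreducible $V$, I would invoke the classification of irreducible curves on ruled surfaces (Hartshorne, Chapter V, Proposition 2.20, where one must remember that Hartshorne's invariant is $\mathfrak{e} = -\tau_0$): such a $V$ is numerically equivalent to $aC_0 + bM$ with $a > 0$ and $b \geq -a\tau_0/2$ (which reduces to $b \geq 0$ in the case $\tau_0 = 0$). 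Then
\begin{eqnarray*}
V.V &=& a^2\tau_0 + 2ab \;\geq\; a^2\tau_0 - a^2\tau_0 \;=\; 0,\\
F.V &=& -\tfrac{a\tau_0}{2} - b + \tfrac{a\gamma}{2} \;\leq\; \tfrac{a\gamma}{2} \;<\; 0,
\end{eqnarray*}
where the final inequality in each line uses $\tau_0 \geq 0$, $b \geq -a\tau_0/2$, and $\gamma < 0$ respectively.

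The only non-trivial input is the bound $b \geq -a\tau_0/2$ for irreducible $V \neq C_0, M$ when $\tau_0 \geq 0$; this is the main obstacle, and the care required is essentially in matching sign conventions with Hartshorne. Everything else is a direct computation using the restriction formula for $F|_F$ from Lemma \ref{LemmaIntersectionOnF} and the elementary fact that distinct irreducible curves on a smooth surface intersect non-negatively.
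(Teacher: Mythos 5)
Your proof is correct and follows essentially the same route as the paper: reduce to the case of an irreducible $V$, dispose of $V=C_0$ and $V=M$ directly, and for the remaining irreducible curves use Hartshorne's numerical classification together with the formula for $e=F|_F$ from Lemma \ref{LemmaIntersectionOnF} (you additionally spell out the reduction to irreducible components and the computation of $F.V$ in part b), which the paper leaves as ``proceed as in a)''). The one point to tidy is the citation: your unified bound $b\geq -a\tau_0/2$ requires Hartshorne's Proposition V.2.21 when $\tau_0>0$ (V.2.20 covers the invariant $\mathfrak{e}=-\tau_0\geq 0$, hence only the case $\tau_0=0$ here); with that adjustment the single inequality you state correctly subsumes the paper's two cases and the rest of the argument goes through.
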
  
\begin{proof}

a) It suffices to prove for the case $V$ is an irreducible curve. Numerically, we write $V=aC_0+bM$. If $V=C_0$ then $V.V=\tau _0\geq 0$. If $V=M$ then $V.V=0$. Hence we may assume that $V\not= C_0,M$. 

We consider two cases:

Case 1: $\tau _0=0$. By Proposition 2.20 in Chapter 5 in \cite{hartshorne}, we have $a>0$ and $b\geq 0$. Therefore
\begin{eqnarray*}
 V.V=a^2\tau _0+2ab\geq 0.
\end{eqnarray*}

Case 2: $\tau _0>0$. By Proposition 2.21 in Chapter 5 in \cite{hartshorne}, there are two subcases:

Subcase 2.1: $a=1,b\geq 0$. Then 
\begin{eqnarray*}
V.V=\tau _0+2b\geq 0.
\end{eqnarray*}

Subcase 2.2: $a\geq 2,b\geq -a\tau _0/2$. Then
\begin{eqnarray*}
V.V=a^2\tau _0+2ab\geq a^2\tau _0+2a(-a\tau _0/2)=0.
\end{eqnarray*}

b) It suffices to prove for the case $V$ is an irreducible curve. If $V=M$ then $F.M=-1<0$. If $V=C_0$ then by Lemma \ref{LemmaIntersectionOnF} with $\tau =\tau _0\geq 0$ and $\mu =1$
\begin{eqnarray*}
F.C_0=\frac{1}{2}(\gamma -\tau _0)\leq \frac{1}{2}\gamma <0
\end{eqnarray*}
because $\gamma <0$. Therefore we may assume that $V\not= C_0,M$, and then proceed as in the proof of a). 
\end{proof}

\subsection{Dynamical degrees and entropy} Let $f:X\rightarrow X$ be a surjective holomorphic map of a compact K\"ahler manifold of dimension $k$. For $1\leq p\leq k$, we define the $p$-th dynamical degree $\lambda _p(f)$ of $f$ to be the spectral radius of the linear map $f^*:H^{p,p}(X)\rightarrow H^{p,p}(X)$. The dynamical degrees are all $\geq 1$, and are log-concave, i.e. $\lambda _{j}(f)^2\geq \lambda _{j+1}(f)\lambda _{j-1}(f)$ (see Dinh-Sibony \cite{dinh-sibony1}\cite{dinh-sibony2}).

Let $d$ be a metric on $X$. A subset $E$ of $X$ is called $(n,\epsilon )$-separated if for any pair $x\not= y\in E$ then $\max\{d(f^i(x),f^i(y)):~0\leq i\leq n-1\}\geq \epsilon$. Denote by $N(n,\epsilon )$ the maximal cardinality of an $(n,\epsilon )$-separated set. Then the topological entropy of $f$ is given by
\begin{eqnarray*}   
h_{top}(f)=\lim _{\epsilon \rightarrow 0}\limsup _{n\rightarrow\infty}\frac{1}{n}\log N(n,\epsilon ).
\end{eqnarray*}

Gromov \cite{gromov} and Yomdin \cite{yomdin} proved the following result, relating dynamical degrees to topological entropy:
\begin{theorem}
Assumptions as above. Then $h_{top}(f)=\max _{1\leq p\leq k}\log \lambda _p(f)$. 
\label{TheoremGromovYomdin}\end{theorem}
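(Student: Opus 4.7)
The plan is to prove the two inequalities
\[ h_{top}(f) \le \max_{p} \log \lambda_p(f) \quad \text{and} \quad h_{top}(f) \ge \max_{p} \log \lambda_p(f) \]
separately; the first is Gromov's upper bound and the second is Yomdin's lower bound. For the upper bound, I would fix a K\"ahler form $\omega$ on $X$ and, for each $n$, form the graph $\Gamma_n=\{(x,f(x),\ldots,f^{n-1}(x)):x\in X\}$ inside $X^n$, equipped with the induced K\"ahler metric $\omega_n=\sum_{i=0}^{n-1}\pi_i^*\omega$. A standard Bowen-type covering argument shows that $h_{top}(f)$ is dominated by the exponential growth rate of $\mathrm{vol}(\Gamma_n)=\int_X(\omega_n|_{\Gamma_n})^k$. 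Expanding the top wedge by the multinomial formula produces a sum of intersection numbers of the form $\int_X(f^{i_1})^*\omega\wedge\cdots\wedge(f^{i_k})^*\omega$, each of which is cohomological and, using the action of $f^*$ on $\bigoplus_p H^{p,p}(X)$, is bounded by a polynomial in $n$ times a product of the $\lambda_{p_j}(f)^{n}$. Log-concavity of the dynamical degrees (already stated in this section) then collapses the maximum of these products to $\max_p\lambda_p(f)^n$, yielding the upper bound.

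For the lower bound I would invoke Yomdin's theorem, which says that for a $C^\infty$ self-map of a compact manifold (any holomorphic map of a compact K\"ahler manifold is such), and any smooth real submanifold $V\subset X$ of real dimension $2p$, one has $h_{top}(f)\ge\limsup_{n\to\infty}\frac{1}{n}\log\mathrm{vol}(f^n(V))$. Choosing $V$ as a generic smooth real $2p$-cycle whose class pairs non-trivially with an eigenvector of $f^*$ on $H^{p,p}(X)$ of modulus $\lambda_p(f)$, one shows that $\mathrm{vol}(f^n(V))$ is bounded below by the absolute value of an intersection number that grows like $\lambda_p(f)^n$. This gives $h_{top}(f)\ge\log\lambda_p(f)$ for every $p$, and hence the desired inequality after taking the maximum.

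The main obstacle is Yomdin's lower bound. Gromov's upper bound is essentially careful cohomological bookkeeping on top of the classical relation between entropy and volume growth of graphs, and relies only on K\"ahler geometry and positivity of $\omega$. Yomdin's lower bound, by contrast, rests on his delicate semi-algebraic parameterization lemma: one must cover images of iterated $C^k$ maps by pieces of controlled distortion in order to translate the growth of $\mathrm{vol}(f^n(V))$ into a genuine lower bound on the number of $(n,\epsilon)$-separated orbits. The K\"ahler hypothesis is the bridge that lets one identify $\lambda_p(f)$ both with the cohomological spectral radius on $H^{p,p}(X)$ appearing in Gromov's estimate and with the geometric volume-growth rate appearing in Yomdin's.
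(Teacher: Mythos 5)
The paper does not prove this statement at all: it is quoted verbatim as a known theorem of Gromov and Yomdin (references \cite{gromov} and \cite{yomdin}), so there is no internal argument to compare yours against. Your two-inequality outline is exactly the standard route those papers take, and as a roadmap it is correct: Gromov's bound $h_{top}(f)\le\limsup_n\frac1n\log\mathrm{vol}(\Gamma_n)$ plus the cohomological expansion of $\mathrm{vol}(\Gamma_n)$, and Yomdin's resolution of the entropy conjecture for $C^\infty$ maps for the reverse inequality. Two points where your sketch is looser than what is actually needed. First, in the upper bound, log-concavity of the $\lambda_p$ is not by itself the mechanism that controls the mixed terms $\int_X(f^{i_1})^*\omega\wedge\cdots\wedge(f^{i_k})^*\omega$; one needs the submultiplicativity estimates (Gromov's original interpolation, or the Dinh--Sibony regularization bounds, which is why the paper cites \cite{dinh-sibony1}\cite{dinh-sibony2} for the dynamical degrees) showing each such term is $O\bigl((\max_p\lambda_p(f)+\epsilon)^n\bigr)$. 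Second, for the lower bound it is cleaner, and avoids the cancellation problem you would face when the relevant eigenvalue of $f^*$ on $H^{p,p}(X)$ is non-real, to quote Yomdin's theorem in its homological form $h_{top}(f)\ge\log\mathrm{sp}\bigl(f^*|_{H^*(X,\mathbb{R})}\bigr)$ and then observe that $H^{p,p}(X)\cap H^{2p}(X,\mathbb{R})$ is an $f^*$-invariant real subspace, so $\lambda_p(f)\le\mathrm{sp}\bigl(f^*|_{H^{2p}(X,\mathbb{R})}\bigr)$. With those repairs your proposal is a faithful account of the classical proof; as written it is an outline whose hard analytic content (in particular Yomdin's algebraic lemma) is named but deferred, which is precisely what the paper itself does by citing the result.
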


Apply the log concavity of dynamical degrees to Gromov-Yomdin's theorem, we deduce that $h_{top}(f)=0$ if and only if $f$ is an automorphism and $\lambda _p(f)=1$ for some (and hence, all) $1\leq p\leq k-1$.

\section{Proofs of Theorems \ref{TheoremDynamicalDegreesAutomorphismsBlowupP3} and  \ref{TheoremAutomorphismBlowupP3}}
For the proof of Theorems \ref{TheoremDynamicalDegreesAutomorphismsBlowupP3} and \ref{TheoremAutomorphismBlowupP3}, we first introduce the following set of cohomology classes, which uses a weaker notion of positivity than that of nef classes. 

\begin{definition} Let $X$ be a projective manifold of dimension $3$. We define by $\mathcal{B}(X)$ the set of cohomology classes $\eta \in H^{1,1}(X)$ satisfying the following conditions:

1) $\eta $ is psef.

2) For every effective curve $V$ in $X$ then $\eta .V\geq 0$.
\label{DefinitionClassB}\end{definition}

We also introduce a larger set of cohomology classes
\begin{definition} Let $X$ be a projective manifold of dimension $3$. We define by $\mathcal{C}(X)$ the set of cohomology classes $\eta \in H^{1,1}(X)$ satisfying the following conditions:

1) $\eta $ is psef.

2) There is a finite number of irreducible curves $V_1,\ldots ,V_t\subset X$ (these curves depend on $\eta $) so that if $V$ is an irreducible curve in $X$ with  $\eta .V< 0$, then $V$ is one of the curves $V_1,\ldots ,V_t$. 
\label{DefinitionClassB}\end{definition}

We have obvious inclusions $H^{1,1}_{nef}(X)\subset \mathcal{B}(X)\subset \mathcal{C}(X)$. The following properties of $\mathcal{B}(X)$ and $\mathcal{C}(X)$ make them useful in induction arguments involving finite blowups in dimension $3$. 

\begin{lemma}
Let $\pi :X\rightarrow Y$ be a blow up of a projective $3$-manifold $Y$ along a point $p\in Y$ or a smooth curve $C\subset Y$. 

a) If $\eta $ is in $\mathcal{C}(X)$ then $\pi _*(\eta )$ is in $\mathcal{C}(Y)$.

b) If $\pi$ is a point blowup and $\eta \in \mathcal{B}(X)$ then $\pi _*(\eta )\in \mathcal{B}(Y)$.

c) If $\pi $ be a blowup of $Y$ along a smooth curve $C\subset Y$ so that $C$ is not the only effective curve in its cohomology class, then $\pi _*(\mathcal{B}(X))\subset \mathcal{B}(Y)$. 
\label{LemmaPushforwardClassBBlowup}\end{lemma}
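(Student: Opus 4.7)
The plan is to treat all three parts uniformly through the decomposition $\eta=\pi^{*}(\pi_{*}(\eta))+aE$ in the point-blowup case (respectively $\eta=\pi^{*}(\pi_{*}(\eta))+aF$ in the curve-blowup case), which exists with a unique real number $a$ since the kernel of $\pi_{*}\colon H^{1,1}(X)\to H^{1,1}(Y)$ is spanned by the exceptional divisor. The psef requirement in each conclusion is automatic from the remark that psef classes are preserved under holomorphic pushforward, so everything reduces to checking the sign of $\pi_{*}(\eta)\cdot W$ for each irreducible curve $W\subset Y$.

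I would first establish two ingredients. \textbf{(I)} The coefficient $a$ is non-positive in every case: testing $\eta$ against a line $L\subset E$ (point blowup) or a fiber $M\subset F$ (curve blowup) yields $\eta\cdot L=-a$ and $\eta\cdot M=-a$ via the formulas $E\cdot L=-1$ and $F\cdot M=-1$. Under the $\mathcal{B}$ hypothesis these are $\geq 0$ directly; under the $\mathcal{C}$ hypothesis, since infinitely many lines (respectively fibers) exist and are all numerically equivalent, only finitely many being ``bad'' forces the common value to be $\geq 0$. Either way $a\leq 0$. \textbf{(II)} For any irreducible $W\subset Y$ distinct from the blowup center, the strict transform $\widetilde{W}\subset X$ satisfies $\pi_{*}(\widetilde{W})=W$ with non-negative intersection against the exceptional divisor (this is Lemma~\ref{LemmaPushforwardCyclesBlowupPoint} in the point case, and follows from $\widetilde{W}\not\subset F$ when the center is a curve). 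The projection formula then yields the key identity
\[
\pi_{*}(\eta)\cdot W \;=\; \eta\cdot\widetilde{W} \,+\, (-a)\bigl(E\cdot\widetilde{W}\bigr),
\]
with $F$ in place of $E$ in the curve case.

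Given (I) and (II), the first two parts fall out immediately. For (b), every irreducible $W\subset Y$ is automatically distinct from the center (a point), and $\eta\cdot\widetilde{W}\geq 0$ because $\eta\in\mathcal{B}(X)$, so the identity gives $\pi_{*}(\eta)\cdot W\geq 0$. For (a), if $\pi_{*}(\eta)\cdot W<0$ the identity forces $\eta\cdot\widetilde{W}<0$, so $\widetilde{W}$ belongs to the finite bad set for $\eta$ on $X$; hence $W=\pi_{*}(\widetilde{W})$ lies in a finite set, to which we adjoin $C$ in the curve-blowup case, proving $\pi_{*}(\eta)\in\mathcal{C}(Y)$.

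The hard part will be the case $W=C$ in part (c), where the strict transform is empty and (II) breaks down. Here I plan to invoke the hypothesis that $C$ is not the unique effective curve in its cohomology class, picking another effective $C'$ with $[C']=[C]$ and $C'\neq C$, then decomposing $C'=mC+D$ with $D$ effective and containing no $C$-component. The relation $[C']=[C]$ gives $(1-m)[C]=[D]$, and pairing both sides against a K\"ahler class rules out $m\geq 1$ (since an effective cycle of zero class must vanish, which after unwinding contradicts either $C'\neq C$ or $[C]\neq 0$); thus $m=0$ and $C'=D$ has no $C$-component. Applying the identity from (II) to each irreducible component of $D$ then yields $\pi_{*}(\eta)\cdot C=\pi_{*}(\eta)\cdot C'\geq 0$, completing the proof.
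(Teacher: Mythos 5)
Your proof follows essentially the same route as the paper's: the decomposition $\eta=\pi^{*}(\pi_{*}\eta)-\alpha E$ (resp.\ $-\alpha F$) with $\alpha\geq 0$ forced by testing against lines/fibers (using finiteness of the bad set in the $\mathcal{C}$ case), strict transforms plus the projection formula for curves other than the center, and an alternative effective representative of $[C]$ for part (c), which the paper simply asserts and you additionally justify. The only slip is your claim that the coefficient $m$ of $C$ in $C'$ must equal $0$ --- the case $0<m<1$ is not excluded by your K\"ahler-class pairing --- but then $D/(1-m)$ is an effective representative of $[C]$ with no $C$-component, so the conclusion is unaffected.
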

\begin{proof}

b) Consider the case when $\pi :X\rightarrow Y$ is a point blowup. Let $E=\mathbb{P}^2$ be the exceptional divisor and let $L\subset E$ be a line. Let $\eta \in \mathcal{B}(X)$ and let $\xi =\pi _*(\eta )$. Then $\xi$ is psef since $\eta$ is so. Let $V\subset Y$ be an irreducible curve, and let $\widetilde{V}\subset X$ be the strict transform of $V$. Then $\pi _*(\widetilde{V})=V$, and $\widetilde{V}$ is not contained in $\widetilde{E}$ therefore $\widetilde{V}.E\geq  0$.  

We can write $\eta =\pi ^*(\xi )-\alpha E$ where $\alpha =\eta .L\geq 0$ since $\eta \in \mathcal{B}(X)$. Hence
\begin{eqnarray*}
\xi .V=\xi .\pi _*(\widetilde{V})=\pi ^*(\xi ).\widetilde{V}=(\eta +\alpha E).\widetilde{V}\geq 0.
\end{eqnarray*}
 
Thus $\xi \in \mathcal{B}(X)$.

a) We consider first the case when $\pi :X\rightarrow Y$ is a blowup of $Y$ along a smooth curve $C\subset Y$. Let $F$ be the exceptional divisor of the blowup. Let $M$ be a fiber of $F\rightarrow C$. Let $\eta \in \mathcal{C}(X)$ and let $\xi =\pi _*(\eta )$. Then $\eta =\pi ^*(\xi )-\alpha F$, where $\alpha =\eta .M$. Observe that $\alpha \geq 0$, because $\eta $ can have negative intersections with only a finite number of irreducible curves while we have infinitely many fibers.

Since $\eta $ is psef, it follows that $\xi $ is psef as well. Let $V\subset Y$ be an irreducible curve which is not contained in the union of $C$ with the images of the irreducible curves having negative intersections with $\eta $. Then we can proceed as in the proof of b) to show that $\xi .V\geq 0$. Hence $\xi \in \mathcal{C}(X)$.

The proof of the case $\pi$ is a point blowup is similar. 

c) Let $\pi :X\rightarrow Y$ is a blowup of $Y$ along a smooth curve $C\subset Y$, where $C$ is not the only effective curve in its cohomology class. Let $F$ be the exceptional divisor of the blowup. Let $M$ be a fiber of $F\rightarrow C$. Let $\eta \in \mathcal{C}(X)$ and let $\xi =\pi _*(\eta )$. Then $\eta =\pi ^*(\xi )-\alpha F$, where $\alpha =\eta .M\geq 0$. 

If $V\subset Y$ is an irreducible curve different from $C$, then by using its strict transform in $X$ we can show as in the proof of b) that $\xi .V\geq 0$. If $V=C$, then since $C$ is not the only effective curve in its cohomology class, we can find an effective curve $C'$ having the same cohomology class as that of $C$ so that the support of $C'$ does not contain $C$. Then we can proceed as in the first case.  
\end{proof}

\begin{lemma}
Let $\pi :X\rightarrow Y$ be a finite composition $X=X_n\rightarrow X_{n-1}\rightarrow \ldots \rightarrow X_1\rightarrow Y$ of point or curve blowups. Assume that the first map $X_1\rightarrow Y$ is a point blowup. Assume moreover that the images of the exceptional divisors of the map $X\rightarrow X_1$ are contained in the exceptional divisor of $X_1\rightarrow Y$. 

Let $\eta \in \mathcal{C}(X)$ and let $V\subset X$ be an effective curve such that $\eta .\eta =V$ and $\pi _*(V)=0$. Then $V=0$. If moreover $\eta \in \mathcal{B}(X)$ then the pushforward of $\eta $ under the map $X\rightarrow X_j$ is in $\mathcal{B}(X_j)$. 
\label{LemmaSolveEquationEta2IsAnEffectiveDivisor}\end{lemma}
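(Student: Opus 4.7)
The plan is a simultaneous induction on $n$, the length of the blowup chain, proving both assertions together. A preliminary fact used throughout: $\eta \in \mathcal{C}(X)$ forces $\eta\cdot L \geq 0$ for any line $L$ in a point-blowup exceptional divisor and $\eta\cdot M\geq 0$ for any fiber $M$ of a curve-blowup exceptional divisor, since each such curve belongs to an infinite family of cohomologous effective curves while $\mathcal{C}(X)$ permits only finitely many irreducible curves of negative intersection. The base case $n=1$ is a direct computation: writing $\eta = \pi_1^*(\xi) - \alpha E$ with $\xi = \pi_*(\eta)$ and $\alpha \geq 0$, one finds $\eta^2 = \pi_1^*(\xi^2) - \alpha^2 L$; pushing forward, $\xi^2 = \pi_*(V) = 0$, so $V = -\alpha^2 L$ and effectivity forces $\alpha = 0$ and $V = 0$. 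Part~2 in this case is immediate from Lemma \ref{LemmaPushforwardClassBBlowup}(b).

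For the inductive step, strip off the last blowup $\pi_n : X \to X_{n-1}$ and set $\xi_{n-1} := (\pi_n)_*(\eta) \in \mathcal{C}(X_{n-1})$ (Lemma \ref{LemmaPushforwardClassBBlowup}(a)). Pushing $\eta^2 = V$ forward by $\pi_n$ yields in the point-blowup case $\xi_{n-1}^2 = (\pi_n)_*(V)$, and in the curve-blowup case along $C_n$, using $(\pi_n)_*(F_n^2) = -C_n$ from Lemma \ref{LemmaIntersectionOnF}(c), yields $\xi_{n-1}^2 = (\pi_n)_*(V) + \alpha_n^2 C_n$; in both cases this is effective. The geometric crux --- the only place the hypothesis about images of exceptional divisors is invoked --- is that $C_n$ contracts to $p \in Y$: $F_n$ is exceptional for $X\to X_1$, so by hypothesis its image in $X_1$ lies in $E$, and hence $C_n$ maps to $p$. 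Thus $\xi_{n-1}^2$ has vanishing pushforward to $Y$, the inductive hypothesis for Part~1 gives $\xi_{n-1}^2 = 0$, and effectivity of the remaining quantities then forces $\alpha_n = 0$ and $V = 0$.

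For Part~2 in the inductive step, the vanishing $\alpha_n = 0$ produced by Part~1 gives $\eta = \pi_n^*(\xi_{n-1})$. I then establish $\xi_{n-1} \in \mathcal{B}(X_{n-1})$ directly, bypassing the hypothesis of Lemma \ref{LemmaPushforwardClassBBlowup}(c) (which may fail here): for any effective irreducible curve $W\subset X_{n-1}$, lift to an effective curve $W'\subset X$ with $(\pi_n)_*(W') = W$ --- the strict transform of $W$ if $W\ne C_n$, or the canonical section $C_0\subset F_n$ if $W = C_n$ --- and apply the projection formula to obtain $\xi_{n-1}\cdot W = \eta\cdot W' \geq 0$. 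Since $\xi_{n-1}^2 = 0$ is trivially an effective class with vanishing pushforward, the inductive hypothesis for Part~2 applies to $\xi_{n-1}$; composing with $\eta = \pi_n^*(\xi_{n-1})$ gives the $\mathcal{B}(X_j)$ conclusion for all pushforwards of $\eta$. The main obstacle is the tight coupling of the two assertions: Part~1 is precisely what produces $\alpha_n = 0$, and only this identity allows Part~2 to reduce to the shorter chain via $\eta = \pi_n^*(\xi_{n-1})$.
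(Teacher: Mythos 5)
Your proposal is correct and follows essentially the same route as the paper: induction on $n$ stripping off the last blowup, pushing forward $\eta.\eta=V$ via $(\pi_n)_*(F^2)=-C$ to get an effective class with vanishing pushforward to $Y$, invoking the hypothesis on exceptional divisors only to see that $C_n$ contracts to the point, and handling the $\mathcal{B}$-statement via strict transforms together with the canonical section $C_0$ over the blown-up curve. The only cosmetic difference is that you spell out the $\xi_{n-1}\in\mathcal{B}(X_{n-1})$ verification in slightly more detail than the paper does.
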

\begin{proof}
We prove this by induction on $n$.

The initial case $n=1$: Let $E_1=\mathbb{P}^2$ be the exceptional divisor of the map $p:X_1\rightarrow Y$. Then we need to show that if $V$ is an effective curve with support in $E_1$ and $\eta \in \mathcal{C}(X_1)$ so that $\eta .\eta =V$ then $V=0$. Let us denote $\xi =\pi _*(\eta )\in \mathcal{C}(Y)$. Let $L_1$ be a line in $E_1=\mathbb{P}^2$. Then $\eta =\pi ^*(\xi )-\alpha E_1$ where $\alpha =\eta .L_1\geq 0$ (as in the proof of a) of Lemma \ref{LemmaPushforwardClassBBlowup}). Since support of $V$ is in $E_1$ and $V$ is effective, there is a number $\beta \geq 0$ so that $V=\beta L_1$. Since $E_1.E_1=-L_1$ and $E_1.\pi ^*(\xi )=0$, we have 
\begin{eqnarray*}
\pi ^*(\xi .\xi  )-\alpha ^2L_1=\eta .\eta =\beta L_1.
\end{eqnarray*}
Since $\pi ^*(\xi .\xi )$ and $L_1$ are linearly independent, it follows that $ -\alpha ^2=\beta $. Since both $\alpha $ and $\beta$ are non-negative, we get $\beta =0=\alpha$. Thus $V=0$ as claimed. If moreover $\eta \in \mathcal{B}(X)$, from the fact that $\eta =p ^*(\xi )$, it follows easily that $\xi \in \mathcal{B}(Y)$ as well.

Now assume that we had the claim for $n=j$. We will prove it for $n=j+1$. Let $p$ denote the map $X_{j+1}\rightarrow X_j$. Let $\eta \in \mathcal{C}(X_{j+1})$ and $V\subset X_{j+1}$ an effective curve so that $\eta .\eta =V$ and $\pi _*(V)=0$ in $H^{2,2}(Y)$. We need to show that $V=0$.    

We consider two cases:

Case 1: $p$ is a point blowup. Let $E=\mathbb{P}^2$ be the exceptional divisor of $p$, and let $L\subset E$ be a line. Let $\xi =p_*(\eta )\in \mathcal{C}(X_j)$, and write $\eta =p^*(\xi )-\alpha E$ for $\alpha =\eta .L\geq 0$. Then $\eta .\eta =V$ becomes
\begin{eqnarray*} 
p^*(\xi .\xi )-\alpha ^2L=V.
\end{eqnarray*}
Push-forward this equation by $p$ we obtain $\xi .\xi =p_*(V)$. Since the push-forward of $V$ under the map $X_{j+1}\rightarrow Y$ is zero, it follows that the push-forward of $p_*(V)$ under the map $X_j\rightarrow Y$ is zero. Therefore the induction assumption implies that $p_*(V)=0$. Therefore $V$ must be a multiple of $L$, and we can write $V=\beta L$ for some $\beta \geq 0$. Also $\xi .\xi =0$ and thus $p^*(\xi .\xi )=0$. Replace this into the original equation we get $-\alpha ^2L=\beta L$ which implies $\beta =\alpha =0$, i.e. $V=0$. If moreover $\eta \in \mathcal{B}(X_{j+1})$, from the fact that $\eta =p ^*(\xi )$, it follows easily that $\xi \in \mathcal{B}(X_j)$ as well.

Case 2: $p$ is a blowup of a smooth curve $C\subset X_j$ so that the push-forward of $C$ under the map $X_j\rightarrow Y$ is $0$. Let $F$ be the exceptional divisor of $p$ and let $M\subset F$ be a fiber of the map $F\rightarrow C$. Let $\xi =p_*(\eta )\in \mathcal{C}(X_j)$, and write $\eta =p^*(\xi )-\alpha F$ for $\alpha =\eta .M\geq 0$. Then $\eta .\eta =V$ becomes
\begin{eqnarray*} 
p^*(\xi .\xi )-2\alpha \pi ^*(\xi ).F+\alpha ^2(F.F)=V.
\end{eqnarray*}
Push-forward this equation by $p$ we obtain $\xi .\xi =\alpha ^2C+p_*(V)$. Since the push-forward of $V$ under the map $X_{j+1}\rightarrow Y$ is zero, it follows that the push-forward of $p_*(V)$ under the map $X_j\rightarrow Y$ is zero. Therefore, the class $\alpha ^2C+p_*(V)$ is effective and has image zero under push-forward by the map $X_j\rightarrow Y$. Apply the induction assumption we have that $\xi .\xi =0=\alpha ^2C+p _*(V)$ and hence $\alpha =0$. The original equation becomes $0=V$, and we are done. If moreover $\eta \in \mathcal{B}(X_{j+1})$, from the fact that $\eta =p ^*(\xi )$ and the existence of a section $C_0\subset F$ (see Section 2.3), we have $\xi .C=\eta .C_0\geq 0$ and it easily follows that $\xi \in \mathcal{B}(Y)$ as well.     
\end{proof}

Now we prove a general result on non-existence of automorphisms of positive entropies (see also Lemma 2.4 and other results in Zhang \cite{zhang3}, and  Dinh-Sibony \cite{dinh-sibony}). 

\begin{theorem}
Let $X$ be a projective manifold of dimension $3$ and let $f:X\rightarrow X$ be an automorphism. Assume that whenever $\eta \in H^{1,1}_{nef}(X)$ is an eigenvector of $f^*:H^{1,1}(X)\rightarrow H^{1,1}(X)$ then either $\eta .\eta \not= 0$ or $\eta \in \mathbb{R}.H^{1,1}(X,\mathbb{Q})$, i.e. there is a real number $a$ and a class $\eta _0\in H^{1,1}(X,\mathbb{Q})$ so that  $\eta =a\eta _0$ (in other words, $\eta$ is proportional to a rational cohomology class and hence to an integral class). Then $\lambda _1(f)=\lambda _2(f)=1$, and therefore $h_{top}(f)=0$.
\label{TheoremNonExistenceClassB}\end{theorem}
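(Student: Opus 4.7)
The plan is to argue by contradiction: suppose $\lambda_1(f)>1$ and derive a violation of the hypothesis. By Lemma~\ref{LemmaPullbackNefClasses}, $f^*$ preserves the salient closed convex nef cone $H^{1,1}_{\mathrm{nef}}(X)$, which has non-empty interior (the K\"ahler cone); applying a Perron--Frobenius/Birkhoff type theorem to $f^*$ on this cone produces a non-zero nef class $\eta$ with $f^*\eta=\lambda_1(f)\eta$. The hypothesis of the theorem then offers a dichotomy: either $\eta\cdot\eta\ne 0$, or $\eta$ is proportional to a non-zero rational class.

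I would first treat the case $\eta\cdot\eta\ne 0$. Writing $\eta$ as a weak limit of K\"ahler classes $\omega_n$, the strongly positive smooth $(2,2)$-forms $\omega_n^2$ pass to a weak limit representing $\eta\cdot\eta$, so $\eta\cdot\eta\in H^{2,2}(X)$ is a non-zero psef class. Since $f^*(\eta\cdot\eta)=\lambda_1(f)^2\,\eta\cdot\eta$, a Perron--Frobenius argument on the psef cone in $H^{2,2}(X)$ yields $\lambda_2(f)\ge\lambda_1(f)^2$, which combined with the log-concavity relation $\lambda_1(f)^2\ge\lambda_0(f)\lambda_2(f)=\lambda_2(f)$ gives $\lambda_2(f)=\lambda_1(f)^2>1$. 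Next I would rerun the argument for $f^{-1}$: Poincar\'e duality for automorphisms of a $3$-fold identifies $\lambda_1(f^{-1})=\lambda_2(f)$ and $\lambda_2(f^{-1})=\lambda_1(f)$, so there is a nef eigenvector $\eta'$ of $(f^{-1})^*$ with eigenvalue $\lambda_1(f)^2>1$. If $(\eta')^2\ne 0$ as well, the symmetric calculation gives $\lambda_2(f^{-1})=\lambda_1(f^{-1})^2$, i.e., $\lambda_1(f)=\lambda_2(f)^2=\lambda_1(f)^4$, whence $\lambda_1(f)=1$, a contradiction.

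The remaining possibility is that $\eta\cdot\eta=0$ or $(\eta')^2=0$; in either case, by the hypothesis, the offending class is proportional to a non-zero rational class. Say $\eta=a\eta_0$ with $\eta_0\in H^{1,1}(X,\mathbb{Q})\setminus\{0\}$ (the $\eta'$ case is handled symmetrically after replacing $f$ by $f^{-1}$). Then $f^*\eta_0=\lambda_1(f)\eta_0$; since $f^*$ preserves the rational subspace, $\lambda_1(f)\in\mathbb{Q}$, and being also an algebraic integer (as an eigenvalue of the integer matrix $f^*|_{H^2(X,\mathbb{Z})/\mathrm{torsion}}$), $\lambda_1(f)$ is an integer $\ge 2$. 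Clearing denominators and dividing by the greatest common divisor of coefficients, I may replace $\eta_0$ by a primitive class $\xi\in H^2(X,\mathbb{Z})/\mathrm{torsion}$ lying in $H^{1,1}(X)$ and satisfying $f^*\xi=\lambda_1(f)\xi$. But $(f^{-1})^*\xi=\lambda_1(f)^{-1}\xi$ must also lie in $H^2(X,\mathbb{Z})/\mathrm{torsion}$ since $f^{-1}$ is a biholomorphism, so $\xi$ equals $\lambda_1(f)$ times an integer class, contradicting primitivity. This completes $\lambda_1(f)=1$; the equality $\lambda_2(f)=1$ then follows from log-concavity $\lambda_2(f)\le\lambda_1(f)^2=1$ together with $\lambda_2(f)\ge 1$, and Theorem~\ref{TheoremGromovYomdin} gives $h_{\mathrm{top}}(f)=0$.

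The main obstacle I anticipate is the first step: ensuring that the Perron--Frobenius eigenvector can be arranged precisely at the spectral radius $\lambda_1(f)$ rather than at some smaller positive eigenvalue, and identifying this spectral radius with a nef eigenvector (rather than a merely psef one). Once this is in place, the rest is a clean combination of log-concavity of dynamical degrees, Poincar\'e duality for automorphisms, and the arithmetic primitivity argument used in the rational subcase.
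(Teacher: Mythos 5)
Your proposal is correct and follows essentially the same route as the paper: a Perron--Frobenius eigenvector in the nef cone at $\lambda_1(f)$, the dichotomy from the hypothesis (with the rational case excluded because a rational eigenvalue of the integral automorphism $f^*$ of $H^2(X,\mathbb{Z})$ is an algebraic unit, hence $\pm 1$ --- your primitivity argument and the paper's constant-term-of-the-characteristic-polynomial argument are two phrasings of the same fact), then $\lambda_2(f)\ge\lambda_1(f)^2$ and the symmetric inequality for $f^{-1}$ via $\lambda_1(f^{-1})=\lambda_2(f)$, forcing $\lambda_1(f)=1$. The only cosmetic difference is that the paper rules out $\eta\cdot\eta=0$ up front rather than deferring the rational subcase to the end.
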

\begin{proof}
Assume in contrast that $\lambda _1(f)>1$. Since $f^*$ preserves the cone $H^{1,1}_{nef}(X)$, by a Perron-Frobenius type theorem, there is a non-zero nef class $\eta $ so that $f^*(\eta )=\lambda _1(f)\eta $. 

First we claim that for such an $\eta $, then $\eta .\eta \not= 0$. Otherwise, by assumption we can write $\eta =a\eta _0$ for some real number $a\in \mathbb{R}$ and $\eta _0\in H^{1,1}(X,\mathbb{Q})$. Dividing by $a$ we may assume that $\eta =\eta _0$ is in $H^{1,1}(X,\mathbb{Q})$. Since $f^*$ preserves $H^{1,1}(X,\mathbb{Q})$, it follows from $f^*(\eta )=\lambda _1(f)\eta $ that $\lambda _1(f)\in \mathbb{Q}$. However, the latter is irrational (see e.g. Zhang \cite{zhang3} and Bedford \cite{bedford}). [For the convenience of the readers, we reproduce the proof of this fact here. Let $A$ be the matrix of $f^*:H^{2}(X,\mathbb{C})\rightarrow H^{2}(X,\mathbb{C})$, then $A$ is an integer matrix, and $\lambda _1(f)$ is a real eigenvalue of $A$. Moreover, $A$ is invertible and its inverse $A^{-1}$ is the matrix of the map $(f^{-1})^*:H^{2}(X,\mathbb{C})\rightarrow H^{2}(X,\mathbb{C})$ hence is also an integer matrix. Therefore $det(A)=\pm 1$. Thus the characteristic polynomial $P(x)$ of $A$ is a monic polynomial of integer coefficients and $P(0)=\pm 1$. Assume that $\lambda _1(f)$ is a rational number. Since $\lambda _1(f)$ is an algebraic integer, it follows that $\lambda _1(f)$ must be an integer. Then we can write $P(x)=(x-\lambda _1(f))Q(x)$, here $Q(x)$ is a polynomial of integer coefficients. If $\lambda _1(f)>1$ we get a contradiction $\pm 1=P(0)=-\lambda _1(f)Q(0)$]. Thus $\eta .\eta \not= 0$ as claimed. 

Therefore $\eta .\eta $ is an eigenvector of $f^*:H^{2,2}(X)\rightarrow H^{2,2}(X)$ of eigenvalue $\lambda _1(f)^2$. Hence $\lambda _2(f)\geq \lambda _1(f)^2$. Since eigenvectors of $f^*:H^{1,1}(X)\rightarrow H^{1,1}(X)$ and $(f^{-1})^*:H^{1,1}(X)\rightarrow (f^{-1})^*:H^{1,1}(X)\rightarrow H^{1,1}(X)$ are the same (the operators $f^*$ and $(f^{-1})^*$ are inverse to each other), we can apply the same argument to the inverse $f^{-1}$ to obtain $\lambda _2(f^{-1})\geq \lambda _1(f^{-1})^2$. But $\lambda _1(f^{-1})=\lambda _2(f)$ and $\lambda _2(f^{-1})=\lambda _1(f)$, since $(f^{-1})^*=f_*$ is conjugate to $f^*$. In fact, let $\omega _X$ be a K\"ahler $(1,1)$ form on $X$. Then (see Dinh-Sibony \cite{dinh-sibony2}\cite{dinh-sibony1})
\begin{eqnarray*}
\lambda _1(f^{-1})&=&\lim _{j\rightarrow\infty}(\int _X(f^{-j})^*(\omega _X)\wedge \omega _X^2)^{1/j}=\lim _{j\rightarrow\infty}(\int _X(f^{j})_*(\omega _X)\wedge \omega _X^2)^{1/j}\\
&=&\lim _{j\rightarrow\infty}(\int _X\omega _X\wedge (f^j)^*(\omega _X^2))^{1/j}=\lambda _2(f),
\end{eqnarray*}
and similarly for the equality $\lambda _2(f^{-1})=\lambda _1(f)$.

Hence we must have $\lambda _1(f)=\lambda _1(f)^2=\lambda _2(f)=\lambda _2(f)^2=1$, as claimed. 
\end{proof}

Now we are ready to give the proofs of Theorems \ref{TheoremAutomorphismBlowupP3} and \ref{TheoremDynamicalDegreesAutomorphismsBlowupP3}.
\begin{proof} (Of Theorem \ref{TheoremAutomorphismBlowupP3}) Let $X=X_n\rightarrow X_{n-1}\rightarrow \ldots \rightarrow X_1\rightarrow X_0=\mathbb{P}^3$ be as in the statement of Theorem \ref{TheoremAutomorphismBlowupP3}. To prove Theorem \ref{TheoremAutomorphismBlowupP3}, it suffices to show that $X$ satisfies the conditions of Theorem \ref{TheoremNonExistenceClassB}. Indeed we will prove a stronger condition: 

Condition (A): If $\eta \in \mathcal{B}(X)$ satisfies $\eta .\eta =0$ then $\eta \in \mathbb{R}.H^{1,1}(X,\mathbb{Q})$.  

We  prove this by induction on $n$.

a) The initial step $n=0$ is clear, since then $X=X_0=\mathbb{P}^3$ and hence if $\eta \in H^{1,1}(X)$ is such that $\eta .\eta =0$ then $\eta =0$. 

b) We show that if $\pi :X=X_1\rightarrow \mathbb{P}^3$ is the blowup of points $p_1,\ldots ,p_t\in \mathbb{P}^3$ and smooth curves $C_1,\ldots ,C_s\subset \mathbb{P}^3$ in general positions, then $X$ satisfies Condition (A). Let $H\subset \mathbb{P}^3$ be the class of a generic hyperplane. Let $E_1,\ldots ,E_t$ be the exceptional divisors corresponding with $e_1,\ldots ,e_t$, and let $L_i\subset E_i$ be a line. Let $F_1,\ldots ,F_s$ be the exceptional divisors corresponding to $C_1,\ldots ,C_s$, and let $M_j\subset F_j$ be a fiber of the projection $F_j\rightarrow C_j$. Let $d_j\geq 1$ be the degree of $C_j$ (hence $d_j=H.C_j$ in $\mathbb{P}^3$), and let $g_j\geq 0$ be the genus of $C_j$. 

The cohomology group $H^{1,1}(X)$ is generated by $H,E_1,\ldots ,E_t,F_1,\ldots ,F_s$, and the cohomology group $H^{2,2}(X)$ is generated by $H.H,L_1,\ldots ,L_t,M_1,\ldots ,M_s$. The intersection product on $X$ is as follows (see, e.g., Section 6 Chapter 4 in the book of Griffiths and Harris \cite{griffiths-harris})
\begin{eqnarray*}
&&H.E_i=0,~H.F_i=d_iM_i,\\
&&E_i.E_j=-\delta _{i,j}L_i,~E_i.F_j=0.\\
&&F_i.F_j=\delta _{i,j}[-d_iH.H+(4d_i+2g_i-2)M_i].
\end{eqnarray*}

If $\eta \in H^{1,1}(X,\mathbb{R})$ we can write $\eta =aH-\sum _{i}e_iE_i-\sum _{j}f_jF_j$ for real numbers $a,e_1,\ldots ,e_t,f_1,\ldots ,f_s$. Then a computation shows 
\begin{eqnarray*}
&&\eta .\eta\\
 &=&a^2H.H-2aH(\sum _ie_iE_i)-2aH(\sum _jf_jF_j)+(\sum _ie_iE_i)^2+(\sum _jf_jF_j)^2+2(\sum _ie_iE_i).(\sum _jf_jF_j)\\
&=&a^2H.H-2a\sum _jd_jf_jM_j-\sum e_i^2L_i+\sum _{j}f_j^2[-d_jH^2+(4d_j+2g_j-2)M_j]. 
\end{eqnarray*}
Therefore $\eta .\eta =0$ if and only if 
\begin{eqnarray*}
a^2&=&\sum _jd_jf_j^2,\\
e_i^2&=&0,~\forall i=1,\ldots ,t,\\ 
2ad_jf_j&=&f_j^2(4d_j+2g_j-2),~\forall j=1, \ldots ,s. 
\end{eqnarray*}

From the equations for $e_i$ we have that $e_i=0$. If $a=0$ then the first equation $a^2=\sum _jd_jf_j^2$ implies that $a=f_1=\ldots =f_s=0$ as well, and hence $\eta =0$. Assume now $a\not= 0$. If $f_j\not=0$ then from the equation for $f_j$ we have
\begin{eqnarray*}
\frac{f_j}{a}=\frac{2d_j}{4d_j+2g_j-2}\in \mathbb{Q},
\end{eqnarray*}
and therefore
\begin{eqnarray*}
\eta =a(H-\sum _{j}\frac{f_j}{a}F_j)\in \mathbb{R}H^{1,1}(X,\mathbb{Q}),
\end{eqnarray*}
as wanted. 

c) Let $X\rightarrow Y$ be a finite composition of blowups along smooth centers, the images in $Y$ of whose exceptional divisors are points. We now show that if $Y$ satisfies the assumptions of Condition (A), then $X$ does also. Without loss of generality, we may assume that $X\rightarrow Y$ can be decomposed as $X=X_n\rightarrow X_{n-1}\rightarrow \ldots \rightarrow X_1\rightarrow Y$, where $X_1\rightarrow Y$ is a point blowup and the images of the exceptional divisors of $X\rightarrow Y$ is that point. 

We need to show that if $\eta \in \mathcal{B}(X)$ be such that $\eta .\eta =0$ then $\eta \in \mathbb{R}H^{1,1}(X,\mathbb{Q})$. We prove this by induction on $n$.

Initial case $n=1$: $X\rightarrow Y=\pi :X_1\rightarrow Y$ be blowup at one point. Let $E$ be the exceptional divisor and let $L$ be a line in $E$. Let $\eta \in \mathcal{B}(X)$ be so that $\eta .\eta =0$. Let $\xi =\pi _*(\eta )$. Then by Lemma \ref{LemmaPushforwardClassBBlowup}, $\xi \in \mathcal{B}(Y)$. We can write $\eta =\pi ^*(\xi )-\alpha E$ for some constant $\alpha\geq 0$. Computing as in Section 2 we obtain 
\begin{eqnarray*}
0=\eta .\eta =\pi ^*(\xi .\xi )-\alpha ^2L.
\end{eqnarray*}
Intersecting the RHS of the above equality with $E$, it follows that $\alpha =0$ and therefore $\pi ^*(\xi .\xi )=0$ as well. Hence $\xi .\xi =0$, and by the induction assumption, it follows that $\xi \in \mathbb{R}H^{1,1}(X,\mathbb{Q})$. Consequently, $\eta =\pi ^*(\xi )\in \mathbb{R}H^{1,1}(X,\mathbb{Q})$. 

Assume by induction that the claim is true for $n=j$. We prove that it is true for $n=j+1$. 

We consider two cases:

Case 1: $p:X_{j+1}\rightarrow X_j$ is a point blowup. Let $E=\mathbb{P}^2$ be the exceptional divisor and let $L\subset E$ be a line. Let $\eta \in \mathcal{B}(X_{j+1})$ be such that $\eta .\eta =0$. We need to show that $\eta \in \mathbb{R}H^{1,1}(X_{j+1},\mathbb{Q})$. Let us write $\xi =p_*(\eta )\in \mathcal{B}(X_j)$ and $\eta =p^*(\xi )-\alpha E$ for $\alpha \geq 0$. Then $\eta .\eta =0$ becomes $p^*(\xi .\xi )-\alpha ^2L=0$ and hence $p^*(\xi .\xi )=\alpha ^2L=0$ since they are linearly independent. Thus $\alpha =0$ and $\xi .\xi =0$. Apply induction assumption we get $\xi \in \mathbb{R}H^{1,1}(X_j,\mathbb{Q})$ and therefore $\eta =p^*(\xi )\in \mathbb{R}H^{1,1}(X_{j+1},\mathbb{Q})$. 

Case 2: $p:X_{j+1}\rightarrow X_j$ is a blowup at a smooth curve $C\subset X_j$ so that the push-forward of $C$ under the map $X_j\rightarrow Y$ is zero. Let $F$ be the exceptional divisor of the map $p$, and let $M\subset F$ be a fiber of the projection $F\rightarrow C$. Let $\eta \in \mathcal{B}(X_{j+1})$ be such that $\eta .\eta =0$. We need to show that $\eta \in \mathbb{R}H^{1,1}(X_{j+1},\mathbb{Q})$. Let us write $\xi =p_*(\eta )$ which is in $\in \mathcal{C}(X_j)$ by Lemma \ref{LemmaPushforwardClassBBlowup}, and $\eta =p^*(\xi )-\alpha F$ for $\alpha \geq 0$. Then $\eta .\eta =0$ becomes $p^*(\xi .\xi )-2\alpha p^*(\xi ).E+\alpha ^2E.E=0$. Push-forward this equation by $p$ we get $\xi .\xi =\alpha ^2C$. Apply Lemma \ref{LemmaSolveEquationEta2IsAnEffectiveDivisor}, it follows that $\xi .\xi =\alpha ^2C=0$. Hence $\alpha =0$, $\xi \in \mathcal{B}(X_j)$ and $\eta =p^*(\xi )$. Apply induction assumption for $\xi .\xi =0$ we get $\xi \in \mathbb{R}H^{1,1}(X_j,\mathbb{Q})$ and therefore $\eta =p^*(\xi )\in \mathbb{R}H^{1,1}(X_{j+1},\mathbb{Q})$. 

d) Let $\pi :X\rightarrow Y$ be the blowup of $Y$ along a smooth curve $C\subset Y$ so that $\gamma :=c_1(Y).C+2g-2<0$, and $C$ is not the only effective curve in its cohomology class. We now show that if $Y$ satisfies Condition (A), then $X$ does so. Let $F$ be the exceptional divisor of the blowup and let $M\subset F$ be a fiber of the projection $F\rightarrow C$.    

Let $\eta \in \mathcal{B}(X)$, then $\xi =\pi _*(\eta )\in \mathcal{B}(Y)$ by Lemma \ref{LemmaPushforwardClassBBlowup} and the assumption on $C$, and there is $a\geq 0$ so that $\eta =\pi ^*(\xi )-aF$. Assume that $\eta .\eta =0$. Then 
\begin{eqnarray*}
0&=&\eta .\eta .F=(\pi ^*(\xi .\xi )-2a\xi .F+a^2F.F).F=2a\xi .C-a^2\gamma . 
\end{eqnarray*} 
Here we used that $F.F.F=-\gamma $ and $\pi _*(F.F)=-C$. From this, it follows that $a=0$. Otherwise we can divide by $a>0$ and obtain $2\xi .C=a\gamma $ which is a contradiction since $\xi .C\geq 0$ (because $\xi \in \mathcal{B}(Y)$) and $\gamma <0$. Knowing $a=0$ we can argue as at the end of the proof of c).   

e) Let $\pi :X\rightarrow Y$ be the blowup of $Y$ along a smooth curve $C\subset Y$ so that there is an irreducible hypersurface $S\subset Y$ containing $C$ satisfying  condition 3) of Theorem \ref{TheoremAutomorphismBlowupP3}. As the last step of the proof of Theorem \ref{TheoremAutomorphismBlowupP3}, we now show that if $Y$ satisfies  Condition (A), then $X$ does so. Let $F$ be the exceptional divisor of the blowup and let $M\subset F$ be a fiber of the projection $F\rightarrow C$.

Let $\eta \in \mathcal{B}(X)$, then $\xi =\pi _*(\eta )\in \mathcal{C}(Y)$ by Lemma \ref{LemmaPushforwardClassBBlowup}, and there is $a\geq 0$ so that $\eta =\pi ^*(\xi )-aF$. Assume that $\eta .\eta =0$. Then 
\begin{eqnarray*}
0&=&\eta .\eta .F=(\pi ^*(\xi .\xi )-2a\xi .F+a^2F.F).F=2a\xi .C-a^2\gamma . 
\end{eqnarray*} 
Here we used that $F.F.F=-\gamma $ and $\pi _*(F.F)=-C$. From this, it follows that $a=0$. Otherwise we can divide by $a>0$ and obtain $2\xi .C=a\gamma $. We now construct an effective curve $C_0\subset F$ and use it to derive a contradiction.

Recall that $\kappa =S.C$, and $\mu \geq 1$ is the multiplicity of $C$ in $S$. Then the strict transform $\widetilde{S}$ of $S$ is given by $\widetilde{S}=\pi ^*(S)-\mu F$, and is an irreducible hypersurface of $X$. Since $\widetilde{S}$ and $F$ are different irreducible hypersurfaces, their intersection $C_0=\widetilde{S}.F=(\pi ^*(S)-\mu F).F$ is an effective curve of $F$. We now compute the numbers $C_0.C_0$ and $C_0.M$. We have
\begin{eqnarray*}  
C_0.C_0&=&\widetilde{S}|_{F}.\widetilde{S}|_{F}=\widetilde{S}.\widetilde{S}.F\\
&=&(\pi ^*(S)-\mu F).(\pi ^*(S)-\mu F).F=-2\mu\pi ^*(S).F.F+\mu ^2F.F.F\\
&=&2\mu S.C-\mu ^2\gamma =2\mu \kappa -\mu ^2\gamma .
\end{eqnarray*}
Denote by $\tau =C_0.C_0$ and $\mu _0=C_0.M$. Note that $\mu _0\not=0$, otherwise we have $C_0$ is a multiplicity of $M$, and hence $\pi _*(C_0)=0$. But from the definition of $C_0$ we can see that $\pi _*(C_0)=\mu C\not= 0$. Then by the computations at the end of Section 2, we have
\begin{eqnarray*}
F.F=-\frac{1}{\mu _0}C_0+\frac{1}{2}(\frac{\tau }{\mu _0^2}+\gamma )M.
\end{eqnarray*}
Pushforward this by the map $\pi$, using that $\pi _*(F.F)=-C$ and $\pi _*(C_0)=\mu C$ we have that $\mu _0=\mu$.  

By Lemma \ref{LemmaIntersectionOnF} and the above computation $\tau =2\mu \kappa -\mu ^2\gamma $, we obtain
\begin{eqnarray*}
F.C_0=\frac{1}{2}(\gamma \mu -\frac{\tau}{\mu})=\gamma \mu -\kappa .
\end{eqnarray*}

Because $\eta \in \mathcal{B}(X)$ and $2\xi .C=a\gamma $, it follows that 
\begin{eqnarray*}
0&\leq&\eta .C_0=(\pi ^*(\xi )-aF).C_0=\mu \xi .C-\frac{a}{2}(\gamma \mu -\frac{\tau}{\mu}),\\
&=&\frac{a}{2}\gamma \mu -\frac{a}{2}(\gamma \mu -\frac{\tau}{\mu})=\frac{a}{2}\frac{\tau}{ \mu} =a(\kappa -\frac{1}{2}\gamma \mu  ).
\end{eqnarray*}
This contradicts the assumptions that $2\kappa <\gamma \mu$ and $a>0$. Therefore $a=0$. Knowing that $a=0$, it follows that $\xi \in\mathcal{B}(Y)$ and we can use the induction assumption for it to have $\xi \in \mathbb{R}.H^{1,1}(Y,\mathbb{Q})$ and therefore $\eta =\pi ^*(\xi )\in \mathbb{R}.H^{1,1}(X,\mathbb{Q})$.
\end{proof}

\begin{proof} (Of Theorem \ref{TheoremDynamicalDegreesAutomorphismsBlowupP3})

Let $\pi :X=X_n\rightarrow X_{n-1}\rightarrow \ldots \rightarrow X_1\rightarrow X_0=\mathbb{P}^3$ be a finite composition of blowups along smooth centers satisfying conditions of Theorem \ref{TheoremDynamicalDegreesAutomorphismsBlowupP3}. 

We first show the following: 

1) Claim 1: If $\eta \in \mathcal{C}(X)$ and $\eta \not= 0$, then either $\eta .\eta \not= 0$ or $K_X.\eta \not= 0$. Here $K_X$ is the canonical divisor of $X$. 

Proof (of Claim 1): We prove the claim by induction on $n$.

The initial case $n=0$: Then $X=\mathbb{P}^3$, $H^{1,1}(X)$ is generated by a generic hyperplane $H\subset \mathbb{P}^3$, $K_X=-4H$ and $\eta =aH$ for some $a>0$. Hence both $\eta .\eta $ and $K_X.\eta $ are non-zero.

Assume that the claim is true for $n=j$. We now show that it is true for $n=j+1$. We define by $p$ the map $X_{j+1}\rightarrow X_j$. Let $\eta \in \mathcal{C}(X_{j+1})$, which is non-zero, we now show that at least one of the expressions $\eta .\eta $ and $K_{X_{j+1}}.\eta \not=0$. 

We define $\xi =p_*(\eta )\in \mathcal{C}(X_j)$. We consider two cases:

Case 1: $p:X=X_{j+1}\rightarrow X_j=Y$ is a point blowup. Let $E=\mathbb{P}^2$ be the exceptional divisor of $p$ and let $L\subset E$ be a line. Then $K_X=p^*(K_Y)+2E$. We can write $\eta =p^*(\xi )-\alpha E$ for $\alpha =\eta .L\geq 0$.   

If we had both $\eta .\eta =0$ and $K_X.\eta =0$ then we have 
\begin{eqnarray*}
0&=&\eta .\eta =(p^*(\xi )-\alpha E).(p^*(\xi )-\alpha E)=p^*(\xi .\xi )-\alpha ^2L,\\
0&=&K_X.\eta =(p^*(K_Y )+2E).(p^*(\xi )-\alpha E)=p^*(K_Y .\xi )+2\alpha L.
\end{eqnarray*}
Since $p^*(\xi .\xi )$ and $L$ are linearly independent, from the first equation we imply that $\xi .\xi =0$ and $\alpha =0$. Similarly, from the second equation we have $K_Y.\xi =0$. If $\xi \not= 0$, then by induction assumption, not both $\xi .\xi $ and $K_Y .\xi $ are zero, and we arrive at a contradiction. If $\xi =0$, then $\eta =p^*(\xi )=0$ as well, and we have a contradiction again. Therefore Claim 1 is proved in Case 1.

Case 2: $p:X=X_{j+1}\rightarrow X_j=Y$ is a blowup of $Y$ along a smooth curve $C\subset Y$ for which $c_1(Y).C\not= 2g-2$, where $c_1(Y)=-K_Y$ is the first Chern class of $Y$ and $g$ is the genus of $C$. Let $F$ be the exceptional divisor of $p$, and let $M\subset F$ be a fiber of the projection $F\rightarrow C$. Let $g$ be the genus of $C$, and let $c_1(Y)=-K_Y$ be the first Chern class of $Y$. Then $K_X=p^*(K_Y)+F$. We can write $\eta =p^*(\xi )-\alpha F$ for $\alpha =\eta .L\geq 0$. 

We define $$\gamma =c_1(Y).C+2g-2.$$ If we had both $\eta .\eta =0$ and $K_X.\eta =0$ then we have 
\begin{eqnarray*}
0&=&\eta .\eta =(p^*(\xi )-\alpha F).(p^*(\xi )-\alpha F)=p^*(\xi .\xi )-2\alpha p^*(\xi ).F+\alpha ^2F.F,\\
0&=&K_X.\eta =(p^*(K_Y )+F).(p^*(\xi )-\alpha F)=p^*(K_Y .\xi )-\alpha p^*(K_Y).F+p^*(\xi ).F-\alpha F.F.
\end{eqnarray*}
Intersecting both of these equations with $F$, using $F.F.F=-\gamma$ and $p_*(F.F)=-C$, we obtain
\begin{eqnarray*}
&&2\alpha \xi .C-\alpha ^2\gamma =0,\\
&&\alpha K_Y.C-\xi .C+\alpha \gamma =0.
\end{eqnarray*}

Then we must have $\alpha =0$. Otherwise, dividing $2\alpha$ from the first equation we have that $\xi .C=\alpha \gamma /2$. Substituting this into the second equation and dividing by $\alpha$ we get $2K_Y.C=-\gamma$. Hence $c_1(Y).C=2g-2$, which is a contradiction.

Now that we have $\alpha =0$, the original equations become $p^*(\xi .\xi )=0$ and $p^*(K_Y.\xi )+p^*(\xi ).E=0$. Push-forward both of these equations to $Y$, we obtain that $\xi .\xi =0$ and $K_Y.\xi =0$ and can proceed as in Case 1.

2) Now we continue with the proof of Theorem \ref{TheoremDynamicalDegreesAutomorphismsBlowupP3}. Let $f\in Aut(X)$.

We first show that $\lambda _2(f)\geq \lambda _1(f)$. To this end, let $\eta$ be a non-zero nef class which is an eigenvector of eigenvalue $\lambda _1(f)\geq 1$ of $f^*:H^{1,1}(X)\rightarrow H^{1,1}(X)$. If $\eta .\eta \not= 0$, then $\eta .\eta$ is an eigenvector of eigenvalue $\lambda _1(f)^2$ of $f^*:H^{2,2}(X)\rightarrow H^{2,2}(X)$, therefore $\lambda _2(f)\geq \lambda _1(f)^2\geq \lambda _1(f)$ as claimed. Otherwise, by Claim 1 we must have $K_X.\eta \not= 0$. Since $f$ is an automorphism of $X$, $f^*(K_X)=K_X$. Therefore $K_X.\eta $ is an eigenvector of eigenvalue $\lambda _1(f)$ of $f^*:H^{2,2}(X)\rightarrow H^{2,2}(X)$, and we again have $\lambda _2(f)\geq \lambda _1(f)$.  
 
If we apply the above argument to $f^{-1}$, we obtain $\lambda _1(f)=\lambda _2(f^{-1})\geq \lambda _1(f^{-1})=\lambda _2(f)$. Therefore $\lambda _1(f)=\lambda _2(f)$, and we are done.
\end{proof}

\section{Examples}
 
\subsection{The case $X_0=\mathbb{P}^2\times \mathbb{P}^1$}
By K\"unneth's formula, $H^{1,1}(X_0)$ is generated by the classes of $\mathbb{P}^2\times \{pt\}$ and $\mathbb{P}^1\times \mathbb{P}^1$ (here $\{pt\}$ means a point). The intersection on $H^{1,1}(X_0)$ is 
\begin{eqnarray*}
\mathbb{P}^2\times \{pt\}.\mathbb{P}^2\times \{pt\}&=&0,\\
\mathbb{P}^2\times \{pt\}.\mathbb{P}^1\times \mathbb{P}^1&=&\mathbb{P}^1\times \{pt\},\\
\mathbb{P}^1\times \mathbb{P}^1.\mathbb{P}^1\times \mathbb{P}^1&=&\{pt\}\times \mathbb{P}^1.
\end{eqnarray*}
By K\"unneth's formula again, $H^{2,2}(X_0)$ is generated by $\mathbb{P}^1\times \{pt\}$ and $\{pt\}\times \mathbb{P}^1$. The pairing between $H^{1,1}(X_0)$ and $H^{2,2}(X_0)$ is given by
\begin{eqnarray*}
\mathbb{P}^2\times \{pt\}.\mathbb{P}^1\times \{pt\}&=&0,\\
\mathbb{P}^2\times \{pt\}.\{pt\}\times \mathbb{P}^1&=&1,\\
\mathbb{P}^1\times \mathbb{P}^1.\mathbb{P}^1\times \{pt\}&=&1,\\
\mathbb{P}^1\times \mathbb{P}^1.\{pt\}\times \mathbb{P}^1&=&0.
\end{eqnarray*}

a) We first check that the space $X_0=$ satisfies Condition (A) in the proof of Theorem \ref{TheoremAutomorphismBlowupP3}. Let $\eta $ be in $H^{1,1}_{nef}(X_0)$ so that $\eta .\eta =0$. We need to show that $\eta \in \mathbb{R}.H^{1,1}(X_0,\mathbb{Q})$. In fact, we can write $\eta =a \mathbb{P}^2\times \{pt\}+b\mathbb{P}^1\times \mathbb{P}^1$ for real numbers $a$ and $b$. Since $\eta $ is nef, we have
\begin{eqnarray*}
 b=\eta .\mathbb{P}^1\times \{pt\}&\geq&0,\\
 a=\eta .\{pt\}\times \mathbb{P}^1&\geq&0.
\end{eqnarray*}
By computation, it follows that $\eta .\eta =2ab\mathbb{P}^1\times \{pt\}+b^2\{pt\}\times \mathbb{P}^1$. Therefore $\eta .\eta =0$ if and only if $ab=b^2=0$, i.e. $b=0$. Hence $\eta =a\mathbb{P}^2\times \{pt\}\in \mathbb{R}.H^{1,1}(X_0,\mathbb{Q})$, as wanted. 

b) We next show the following: Let $p_1,\ldots ,p_n$ and $p$ be pairwise distinct points in $\mathbb{P}^2$. Let $\pi :X\rightarrow X_0=\mathbb{P}^2\times \mathbb{P}^1$ be the blowup at curves $p_1\times \mathbb{P}^1,\ldots ,p_n\times \mathbb{P}^1$, and let $C=p\times \mathbb{P}^1$. Then $C$ does not satisfy both conditions 2) and 3) in Theorem \ref{TheoremAutomorphismBlowupP3}. Moreover, note that some of these spaces $X$ does have automorphisms of positive entropies (see Remarks right after Theorem \ref{TheoremAutomorphismBlowupP3}). Therefore, we see that the conditions of Theorem \ref{TheoremAutomorphismBlowupP3} are somewhat optimal. 

\begin{proof}
Using Whitney's sum formula for Chern classes, we find that $c_1(X_0)=2\mathbb{P}^2\times \{pt\}+3\mathbb{P}^1\times \mathbb{P}^1$. Denote by $Z$ the blowup of $\mathbb{P}^2$ at the points $p_1,\ldots ,p_n$, and let $E_1,\ldots ,E_n$ be the corresponding exceptional divisors. Then $X$ is biholomorphic equivalent to $Z\times \mathbb{P}^1$, and $c_1(X)=\pi ^*(2\mathbb{P}^2\times \{pt\}+3\mathbb{P}^1\times \mathbb{P}^1)-\sum _jE_j\times \mathbb{P}^1$. The curve $C=p\times \mathbb{P}^1$ has genus $g=0$, and has intersections $0$ with the exceptional divisors $E_j\times \mathbb{P}^1$ since $p$ is different from $p_j$. Therefore
\begin{eqnarray*}
\gamma =c_1(X).C+2g-2&=&(2\mathbb{P}^2\times \{pt\}+3\mathbb{P}^1\times \mathbb{P}^1).p\times \mathbb{P}^1-2\\
&=&2-2=0.  
\end{eqnarray*}
Thus $C$ does not satisfy condition 2) of Theorem \ref{TheoremAutomorphismBlowupP3}.

Now let $\widetilde{S}$ be an irreducible hypersurface of $X$ containing $C$. Then $\widetilde{S}$ can not be one of the exceptional divisors $E_j\times \mathbb{P}^1$, since $p\not= p_j$. Therefore, $\widetilde{S}$ must be the strict transform of a hypersurface $S\subset X_0$. Therefore, in cohomology: $\widetilde{S}=\pi ^*(S)-\sum _{j}\mu _jE_j\times \mathbb{P}^1$, here $\mu _j$ is the multiplicity of $p_j\times \mathbb{P}^1$ in $S$. If we let $p$ vary, we see that the curve $C$ moves in a family of curves which cover the whole space $X_0=\mathbb{P}^2\times \mathbb{P}^1$. Hence there must be a curve in the family intersecting $S$ at isolated points, and this shows that $S.C\geq 0$. Whatever the multiplicity $\mu$ of $C$ in $\widetilde{S}$ is, then $\mu .\gamma =0$. Also, $C$ has intersections $0$ with exceptional divisors $E_j\times \mathbb{P}^1$ as above. Therefore 
\begin{eqnarray*}  
\kappa =\widetilde{S}.C=S.C\geq 0=\mu \gamma .
\end{eqnarray*}
Thus condition 3) in Theorem \ref{TheoremAutomorphismBlowupP3} is not satisfied for $C$.
\end{proof}

From the above proof we obtain the following consequence
\begin{corollary}
Let $\pi :S\rightarrow \mathbb{P}^2$ be a finite blowup of $\mathbb{P}^2$. Then for any automorphism $f:S\times \mathbb{P}^1\rightarrow S\times \mathbb{P}^1$ we have $\lambda _1(f)=\lambda _2(f)$. 
\label{CorollaryDynamicalDegreeBlowupP2P1}\end{corollary}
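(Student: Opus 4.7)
The approach is to reduce to the key Claim 1 used in part 2) of the proof of Theorem \ref{TheoremDynamicalDegreesAutomorphismsBlowupP3}, namely that for every non-zero class $\eta\in H^{1,1}(X)$, either $\eta\cdot\eta\neq 0$ or $K_X\cdot\eta\neq 0$. Note that Theorem \ref{TheoremDynamicalDegreesAutomorphismsBlowupP3} does not apply directly to $X=S\times\mathbb{P}^1$: viewed as a blowup of $\mathbb{P}^2\times\mathbb{P}^1$ along curves of the form $p\times\mathbb{P}^1$, the invariant $\gamma=c_1(\mathbb{P}^2\times\mathbb{P}^1)\cdot C+2g-2$ equals $0$ (as computed in the proof above), so condition 2) of that theorem is violated. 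Thus I plan to establish Claim 1 directly from the product structure, and then re-run the Perron-Frobenius argument.

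The first step is the K\"unneth decomposition: write any $\eta\in H^{1,1}(X)$ as $\eta=\pi_S^*(\alpha)+tF$ with $\alpha\in H^{1,1}(S)$, $t\in\mathbb{R}$, and $F=S\times\{pt\}$. Using $F\cdot F=0$, $\pi_S^*(\alpha)\cdot F=\alpha\times\{pt\}$, and $\pi_S^*(\alpha)^2=(\alpha\cdot_S\alpha)\,\{pt\}\times\mathbb{P}^1$, I compute
\begin{eqnarray*}
\eta\cdot\eta &=& (\alpha\cdot_S\alpha)\,\{pt\}\times\mathbb{P}^1 + 2t\,\alpha\times\{pt\}.
\end{eqnarray*}
Since $K_{\mathbb{P}^1}=-2[pt]$, we have $K_X=\pi_S^*(K_S)-2F$, which yields
\begin{eqnarray*}
K_X\cdot\eta &=& (K_S\cdot_S\alpha)\,\{pt\}\times\mathbb{P}^1 + (tK_S-2\alpha)\times\{pt\}.
\end{eqnarray*}

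The second step is a short case analysis. Assume $\eta\neq 0$ and $\eta\cdot\eta=0$. Since $\{pt\}\times\mathbb{P}^1$ and the classes $\beta\times\{pt\}$ (for $\beta\in H^{1,1}(S)$) are linearly independent in $H^{2,2}(X)$, we get $\alpha\cdot_S\alpha=0$ and $t\alpha=0$. If $t=0$ then $\alpha\neq 0$, so $K_X\cdot\eta$ contains the non-zero summand $-2\alpha\times\{pt\}$. If $t\neq 0$ then $\alpha=0$, so $\eta=tF$ and $K_X\cdot\eta=tK_S\times\{pt\}$; but $K_S=\pi^*(-3H)+\sum E_j$ (where $E_j$ range over all exceptional divisors occurring in $S\to\mathbb{P}^2$) still has non-trivial pullback of $-3H$ and hence is non-zero in $H^{1,1}(S)$, so $K_X\cdot\eta\neq 0$. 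This proves Claim 1 for $X=S\times\mathbb{P}^1$.

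The third step is the Perron-Frobenius argument copied verbatim from part 2) of the proof of Theorem \ref{TheoremDynamicalDegreesAutomorphismsBlowupP3}: pick a non-zero nef class $\eta$ with $f^*\eta=\lambda_1(f)\eta$. If $\eta\cdot\eta\neq 0$, it is an eigenvector of $f^*$ on $H^{2,2}$ with eigenvalue $\lambda_1(f)^2$, giving $\lambda_2(f)\geq\lambda_1(f)^2\geq\lambda_1(f)$. Otherwise, by the claim just established, $K_X\cdot\eta\neq 0$; since $f^*K_X=K_X$ for the automorphism $f$, this class is an eigenvector on $H^{2,2}$ with eigenvalue $\lambda_1(f)$, giving $\lambda_2(f)\geq\lambda_1(f)$. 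Applying the same argument to $f^{-1}$ and using $\lambda_1(f^{-1})=\lambda_2(f)$, $\lambda_2(f^{-1})=\lambda_1(f)$, we obtain the reverse inequality, hence $\lambda_1(f)=\lambda_2(f)$. The main point to get right is the case $\eta=tF$: one must observe that even though $F$ has self-intersection zero, the canonical class $K_S$ is non-trivial for every finite blowup of $\mathbb{P}^2$, so the product-level argument saves the day precisely where Theorem \ref{TheoremDynamicalDegreesAutomorphismsBlowupP3} breaks down.
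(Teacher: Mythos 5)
Your argument is correct, but it is more roundabout than the paper's, and it rests on a misreading of which theorem fails here. The paper's proof is a one-line appeal to Theorem \ref{TheoremDynamicalDegreesAutomorphismsBlowupP3} (in its $\mathbb{P}^2\times\mathbb{P}^1$ version, as explained in the remarks after Theorem \ref{TheoremAutomorphismBlowupP3}): the hypothesis there is $c_1(X_j).C\neq 2(g-1)$, and for $C=p\times\mathbb{P}^1$ one has $c_1.C=2\neq -2=2(g-1)$, so the theorem applies directly. What the computation $\gamma=c_1.C+2g-2=0$ rules out is conditions 2) and 3) of Theorem \ref{TheoremAutomorphismBlowupP3} (the zero-entropy statement), not the hypothesis of Theorem \ref{TheoremDynamicalDegreesAutomorphismsBlowupP3}; the two quantities $c_1.C-(2g-2)$ and $c_1.C+(2g-2)$ are different, and you conflated them. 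That said, your substitute argument is sound: the K\"unneth computation of $\eta.\eta$ and $K_X.\eta$ on $S\times\mathbb{P}^1$ is correct (using $H^1(S)=0$ so that $H^{1,1}(X)=\pi_S^*H^{1,1}(S)\oplus\mathbb{R}F$), the case analysis correctly establishes Claim 1 for every non-zero $\eta\in H^{1,1}(X)$ (not merely nef ones, which is harmless), and the Perron--Frobenius step is the same as in part 2) of the paper's proof. In effect you have re-proved the relevant instance of Claim 1 from the product structure rather than by the paper's induction on blowups; this buys a self-contained and arguably more transparent verification, at the cost of not noticing that the general theorem already covers the case. One cosmetic point: for iterated point blowups of $\mathbb{P}^2$ the canonical class is $K_S=\pi^*(-3H)+\sum a_jE_j$ with coefficients $a_j$ that need not all equal $1$ when centers are infinitely near, but your argument only uses $K_S\neq 0$, which holds because the coefficient of $H$ is $-3$.
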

\begin{proof}
We can see from the computation in the above proof that Theorem \ref{TheoremDynamicalDegreesAutomorphismsBlowupP3} applies: the manifold $X=S\times \mathbb{P}^1$ is obtained as a finite composition of blowups  $X_{j+1}\rightarrow X_{j}$ along curves $C_j$ which are isomorphic to $\mathbb{P}^1$, hence $c_1(X_j).C_j=2\not= 2g-2 =-2$. 
\end{proof}
We observe that Corollary \ref{CorollaryDynamicalDegreeBlowupP2P1} is compatible with the fact that known examples of automorphisms of positive entropies of $S\times \mathbb{P}^1$ are products $g\times h:S\times \mathbb{P}^1\rightarrow S\times \mathbb{P}^1$. 

c) Finally we show the following: Let $p_1,\ldots ,p_n$ and $p$ be pairwise distinct points in $\mathbb{P}^1$. Let $D_1,\ldots ,D_n$ and $D$ be smooth curves in $\mathbb{P}^2$. Let $\pi :X\rightarrow X_0=\mathbb{P}^2\times \mathbb{P}^1$ be the blowup at curves $D_1\times p_1,\ldots ,D_n\times p_n$. Then the curve $C=D\times p$ does not satisfy condition 2) of Theorem \ref{TheoremAutomorphismBlowupP3}, but it does satisfy condition 3) of Theorem \ref{TheoremAutomorphismBlowupP3}. Therefore, any automorphism of $X$ has topological entropy zero. 

\begin{proof}
Let $E_j$ be the exceptional divisor of the blowup corresponding to $D_j\times p_j$. Then 
\begin{eqnarray*}
c_1(X)= \pi ^*(2\mathbb{P}^2\times \{pt\}+3\mathbb{P}^1\times \mathbb{P}^1)-\sum _jE_j.
\end{eqnarray*}
Let $d\geq 1$ be the degree of $D$, and let $g\geq 0$ be its genus. Note that $C=D\times p$ is disjoint from the curves $D_j\times p_j$ since $p\not= p_j$, hence $C$ has intersection $0$ with the exceptional divisors $E_j$. Therefore
\begin{eqnarray*}
\gamma =c_1(X).C+2g-2&=&(2\mathbb{P}^2\times \{pt\}+3\mathbb{P}^1\times \mathbb{P}^1).D\times p=3d+2g-2\geq 1.
\end{eqnarray*}  
This shows that $C$ does not satisfy condition 2) in Theorem \ref{TheoremAutomorphismBlowupP3}.

Let $S=\mathbb{P}^2\times p\subset \mathbb{P}^2\times \mathbb{P}^1$, which can be  identified with its strict transform in $X$ since $S$ has no intersection with the centers of blowups. Then $S$ is an irreducible hypersurface in $X$ containing $C=D\times p$, and the multiplicity of $C$ in $S$ is $\mu =1$. Moreover, 
\begin{eqnarray*}
\kappa =S.C=\mathbb{P}^2\times p.D\times p=0.    
\end{eqnarray*}
Hence $2\kappa =0<1\leq \mu \gamma $, which shows that condition 3) of Theorem \ref{TheoremAutomorphismBlowupP3} is satisfied. 
\end{proof}

\subsection{The case $X_0=\mathbb{P}^1\times \mathbb{P}^1\times \mathbb{P}^1$} This case is very similar to the case $X_0=\mathbb{P}^2\times \mathbb{P}^1$ above. The readers can easily redo all the (analogs of) computations and constructions in the previous section. 
 
\subsection{Proofs of Examples 3, 4, 5 and 6.}
\begin{proof} (Of Example 3)
Let $E_1,\ldots ,E_t$ be the exceptional divisors of the blowup $Y\rightarrow \mathbb{P}^3$, and let $L_1\subset E_1,\ldots ,L_t\subset E_t$ be lines. Let $H$ be a generic hyperplane in $\mathbb{P}^3$. Let $C$ be the strict transform of $D$ and $S$ is the strict transform of $W$. Then their classes are
\begin{eqnarray*}
C&=&dH.H-\sum _iL_i,\\
S&=&H-\sum _iE_i,
\end{eqnarray*}
while $c_1(Y)=4H-2\sum _iE_i$. Since $D$ is a smooth plane curve, by the genus formula, the genus $g$ of $D$ is $g=(d-1)(d-2)/2$ which is the same as that of $C$. Therefore,
\begin{eqnarray*}
\kappa &=&S.C=d-t,\\
\gamma &=&c_1(Y).C+2g-2=4d-2t+(d-1)(d-2)-2,\\
\mu &=&1. 
\end{eqnarray*}
Hence the inequality $2\kappa <\mu \gamma$ is the same as
\begin{eqnarray*}
2d+(d-1)(d-2)-2>0,
\end{eqnarray*}
which is satisfied when $d\geq 2$. For the case $d=1$, the proof that $X$ satisfies Theorem \ref{TheoremDynamicalDegreesAutomorphismsBlowupP3} when $t\not= 3$ can be done similarly to the above, see Example 5 for an explicit calculation when $t=2$.  
\end{proof}

\begin{proof} (Of Example 4)
The blowup $Y\rightarrow \mathbb{P}^3$ is the blowup $X_1\rightarrow \mathbb{P}^3$ of Theorem \ref{TheoremAutomorphismBlowupP3}, therefore satisfies Condition (A) in the proof of Theorem \ref{TheoremAutomorphismBlowupP3}. Let $F$ be the exceptional divisor of the blowup $Y\rightarrow \mathbb{P}^3$, and let $M$ be a fiber of $F\rightarrow C_1$.

Let $d_1$ and $d_2$ be the degrees of $C_1$ and $C_2$. Then $C_1$ and $C_2$ intersect at $d_1.d_2$ points in $W$, by Bezout theorem. Let $C$ be the strict transform of $C_2$ in $Y$, then its class is $C=d_2H.H-d_1d_2M$. Let $S$ be the strict transform of $W$ in $Y$. Then $S$ contains $C$ and its class is $S=H-F$. The first Chern class of $Y$ is $c_1(Y)=4H-F$. 

We now check that $C$ satisfies condition 3) of Theorem \ref{TheoremAutomorphismBlowupP3}. We have $\mu =1$, 
\begin{eqnarray*}
\gamma =c_1(Y).C+2g-2=4d_2-d_1d_2+2g-2,
\end{eqnarray*}
and
\begin{eqnarray*}
\kappa =S.C=d_2-d_1d_2.
\end{eqnarray*}
Therefore, since $d_1,d_2\geq 1$ and $g\geq 0$, we have
\begin{eqnarray*}
\mu \gamma -2\kappa =2d_2+d_1d_2+2g-2 >0,
\end{eqnarray*}
as wanted. 
\end{proof}

\begin{proof} (Of Example 5)
Let $E_0$, $E_1$, $E_2$ and $E_3$ be the exceptional divisors of the blowup $Y\rightarrow \mathbb{P}^3$, and let $L_0\subset E_0$,  $L_1\subset E_1$, $L_2\subset E_2$ and $L_3\subset E_3$ be the generic lines. Let $H\subset \mathbb{P}^3$ be a generic hyperplane. Then $C=\widetilde{\Sigma _{0,1}}=H.H-L_2-L_3$, degree of $C$ is $d=1$ and its genus is $g=0$. The first Chern class of $Y$ is 
\begin{eqnarray*}
c_1(Y)=4H-2E_0-2E_1-2E_2-2E_3.
\end{eqnarray*}
Therefore 
\begin{eqnarray*}
c_1(Y).C+2g-2=4-2-2+0-2=-2<0.
\end{eqnarray*}
Similarly we can check for other curves $\widetilde{\Sigma _{i,j}}$. However, these curves are the unique effective curves in their cohomology classes, thus Theorem \ref{TheoremAutomorphismBlowupP3} does not apply. Theorem \ref{TheoremDynamicalDegreesAutomorphismsBlowupP3} does apply though, since $0=c_1(Y).C\not= 2g-2=-2$.   
\end{proof}

\begin{proof} (Of Example 6)
Let $p:Y\rightarrow \mathbb{P}^3$ be the blowup of $\mathbb{P}^3$ at $e_1$ and $e_3$. Let $E_1$ and $E_3$ be the exceptional divisors, and let $L_1\subset E_1$ and $L_2\subset E_2$ be generic lines. Let $H\subset \mathbb{P}^3$ be a generic hyperplane. Since $\Sigma _{0,1}$ contains $e_3$, the class of its strict transform $\widetilde{\Sigma _{0,1}}$ in $Y$ is $H.H-L_3$. Because 
$$c_1(Y)=p ^*(c_1(\mathbb{P}^3))-2E_1-2E_3=4H-2E_2-2E_3,$$ 
and the genus of $\widetilde{\Sigma _{0,1}}$ is $g=0$, we have
\begin{eqnarray*} 
c_1(Y).\widetilde{\Sigma _{0,1}}+2g-2=(4H-2E_2-2E_3)(H.H-L_3)-2=4-2-2=0.
\end{eqnarray*}
Therefore $\widetilde{\Sigma _{0,1}}$ does not satisfy condition 2) in Theorem \ref{TheoremAutomorphismBlowupP3}, and it does not satisfy conditions 1) and 3) as well. But we can compute directly (as in part b) of the proof of Theorem \ref{TheoremAutomorphismBlowupP3}) to show that the space $Z$, which is the blowup of $Y$ at $\widetilde{\Sigma _{0,1}}$ satisfies Condition (A) in the proof of Theorem \ref{TheoremAutomorphismBlowupP3}. Let $F$ be the exceptional divisor of the blowup $q:Z\rightarrow Y$ and let $M\subset F$ be a fiber of $F\rightarrow \widetilde{\Sigma _{0,1}}$. Since $e_1\in \Sigma _{0,3}$ and $\Sigma _{0,3}\cap \Sigma _{0,1}=e_2\not= e_1,e_3$, the class of the strict transform $\widetilde{\Sigma _{0,3}}$ in $Z$ is $H.H-L_1-M$. Meanwhile
\begin{eqnarray*}
c_1(Z)=q^*(c_1(Y))-F=4H-2E_1-2E_3-F. 
\end{eqnarray*}
Therefore, since the genus of $\widetilde{\Sigma _{0,3}}$ is $g=0$, it follows that $c_1(Z).\widetilde{\Sigma _{0,3}}+2g-2=-1<0$. Moreover, $\widetilde{\Sigma _{0,3}}$ is not the only effective curve in its cohomology class (its cohomology class is the same as the class of the strict transform of a generic line passing to $e_3$ and intersecting $\Sigma _{0,1}$).  Part d) of the proof of Theorem \ref{TheoremAutomorphismBlowupP3} implies that $X$ satisfies Condition (A). Therefore any automorphism of $X$ has zero entropy.

Alternatively, we can show that $\widetilde{\Sigma _{0,3}}$ satisfies condition 3) of Theorem \ref{TheoremAutomorphismBlowupP3}. We let $S$ be the strict transform of the hyperplane $\Sigma _0=\{x_0=0\}\subset \mathbb{P}^3$. Then $S$ contains $\widetilde{\Sigma _{0,3}}$ with multiplicity $\mu =1$. The class of $S$ is $S=H-E_1-E_3-F$. Therefore $\kappa =S.\widetilde{\Sigma _{0,3}}=-1$, and 
\begin{eqnarray*}
2\kappa =-2<-1=c_1(Z).\widetilde{\Sigma _{0,3}}+2g-2=\mu \gamma .
\end{eqnarray*}

\end{proof}

\end{document}